\documentclass[10pt,reqno]{article}
\usepackage{a4wide}

\usepackage{amssymb}

\usepackage{amssymb, amsfonts, amsbsy, latexsym}
\usepackage{graphicx}
\usepackage{tikz}
\usepackage{color}
\usepackage{colortbl}
\usepackage[english]{babel}
\usepackage{url}
\usepackage{amsmath}

\usepackage{scalerel}

\usepackage{relsize}

\usepackage{amsthm}

\usepackage{bbm}

\usepackage{blkarray}

\usepackage{bm}

\newtheorem{theorem}{Theorem}
\newtheorem{definition}[theorem]{Definition}
\newtheorem{proposition}[theorem]{Proposition}
\newtheorem{remark}[theorem]{Remark}
\newtheorem{lemma}[theorem]{Lemma}
\newtheorem{example}[theorem]{Example}
\newtheorem{assumption}[theorem]{Assumption}

\newtheorem{approximation problem}[theorem]{Approximation problem}

\newtheorem{corollary}[theorem]{Corollary}

\newcommand{\ip}[2]{\left\langle#1,#2\right\rangle}
\newcommand{\abs}[1]{\left|#1\right|}
\newcommand{\norm}[1]{\left\|#1\right\|}

\def\hf{\hat{f}}
\def\hs{\hat{s}}

\def\hq{\hat{q}}

\def\w{\omega}
\def\d{\delta}
\def\e{\epsilon}

\def\k{\kappa}

\def\NN{\mathbb{N}}

\def\cF{\mathcal{F}}

\def\cH{\mathcal{H}}

\def\CC{\mathbb{C}}
\def\NN{\mathbb{N}}

\def\RR{\mathbb{R}}

\begin{document}

\title{Randomized Signal Processing with Continuous Frames\\
$ $
\\
{\large  \ \ \ \ \ \textbf{Ron Levie} \quad \quad \quad \quad \quad \quad \quad\ \quad \ \  \textbf{Haim Avron}} \quad \\
{\normalsize   \quad \quad \textit{levie@math.lmu.de} \ \quad \quad  \quad \quad \ \ \ \ \ \quad \ \ \ \ \textit{haimav@tauex.tau.ac.il} }\\
{\normalsize  \textit{Technische Universit\"{a}t Berlin} \ \ \ \ \  \quad \quad \quad  \quad \quad  \textit{Tel Aviv University} }
}

\date{}

\maketitle

\begin{abstract}
This paper focuses on signal processing tasks in which the signal is transformed from the signal space to a higher dimensional coefficient space (also called phase space) using a continuous frame, processed in the coefficient space, and synthesized to an output signal. 
 We show how to approximate such methods, termed phase space signal processing methods, using a Monte Carlo method.
\textcolor{black}{As opposed to standard discretizations of continuous frames, based on sampling discrete frames from the continuous system, the proposed Monte Carlo method is directly a quadrature approximation of the continuous frame.}
We show that the Monte Carlo method \textcolor{black}{allows working with highly redundant continuous frames,}  since the number of samples required for a certain accuracy is proportional to the dimension of the signal space, and not to the dimension of the phase space. \textcolor{black}{Moreover, even though the continuous frame is highly redundant, the Monte Carlo samples are spread uniformly, and hence represent the coefficient space more faithfully than standard frame discretizations.}
\end{abstract}

\section{Introduction}

\textcolor{black}{We consider signal processing tasks based on continuous frames \cite{Cframe0,Cframe1}. In the general setting, an input signal $s$, from the Hilbert space of signals $\cH$, is first analyzed into a coefficient space representation $V_f[s]$ via the frame analysis operator $V_f$. Then, $V_f[s]$ is manipulated in the coefficient space by first applying a pointwise nonlinearity $r\circ V_f[s]$ and then a linear operator $T$, to produce $T(r\circ V_f[s])$, which is finally synthesized back to the signal space via $V_f^*$.} The end-to-end pipeline reads
\begin{equation}
\label{eq:end-to-end-pssp}
\cH\ni s \mapsto V_f^*T(r\circ V_f[s]).
\end{equation}
We call pipelines of the form (\ref{eq:end-to-end-pssp}) \emph{phase space signal processing}.
The linear operator $T$ models a global change of the signal in the feature space, while the nonlinearity $r$ allows modifying the feature coefficients term-by-term with respect to their values. 

Some examples of continuous frames are the 1D continuous wavelet transform - CWT \cite{Cont_wavelet_original,Ten_lectures}, the short time Fourier transform - STFT \cite{Time_freq}, the Shearlet transform \cite{Shearlet} and the Curvelet transform \cite{Curvelet}. 
\textcolor{black}{Signal processing tasks of the form (\ref{eq:end-to-end-pssp}) are used in a multitude of applications. In multipliers \cite{ex1,ex2,New_mult0,New_mult1,New_mult2}, $T$ is a multiplicative operator and the nonlinearity is trivial $r(x)=x$. Multipliers have applications, for example, in audio analysis
\cite{ex3}
and increasing signal-to-noise ratio
\cite{ex4}. In signal denoising, e.g.,  wavelet shrinkage denoising 
\cite{ex5,ex6},
and Shearlet denoising \cite{ex7},   the linear operator is trivial $T=I$ and $r$ is a nonlinearity that attenuates low values. The same is true in shearlet based image enhancement \cite[Section 4]{ImEnhance0}.
In phase vocoder, $T$ is a dilation operator  and $r$ is a so-called phase correction nonlinearity \cite{phase_vocoder1,phase_vocoder2,vocoder_book,vocoder_imp,New_vocoder0,New_vocoder1,Ottosen2017APV,ltfatnote050}. 
In analysis-based iterative thresholding algorithms for inverse problems, the sparsification step in each iteration can be written as (\ref{eq:end-to-end-pssp}) with $T=I$ and $r$ a thresholding nonlinearity \cite{ItThresh0,SepThresh0}. 
}

\textcolor{black}{We note that the theory presented in this paper works also when the analysis and synthesis in (\ref{eq:end-to-end-pssp}) are done by two different frames sharing the same feature space. An example application is shearlet or curevelet based Radon transform inversion \cite{RadShearletInv,RadShearletInv2,Curve_Radon}, where $T$ is a multiplicative operator, $r$ is thresholding, analysis is done by the curvelet/shearlet frame, and synthesis is done by some modified curvelet/shearlet frame.
For simplicity of the presentation, we stick to the same frame for analysis and synthesis. More accurately, in (\ref{eq:PSSPS22}) and (\ref{eq:PSSPA2}) we extend (\ref{eq:end-to-end-pssp}) to a pipeline based on a frame and its canonical dual.}

\textcolor{black}{In this paper we study quadrature discretizations of (\ref{eq:end-to-end-pssp}) based on random samples.}



\subsection{Quadrature vs. discrete frame discretizations of continuous frames}

An analysis operator of a continuous frame $V_f$ has the form
\begin{equation}
\cH\ni s \mapsto V_f[s] = \ip{s}{f_{(\cdot)}}\in L^2(G).
\label{eq:CHS10}
\end{equation}
where $G$ is a measure space called \emph{phase space}, that usually has some physical interpretation (e.g., in the STFT $G$ is the time-frequency plane), $\{f_g\}_{g\in G}$ is the continuous frame, and $L^2(G)$ is called the coefficient space. Accordingly, the synthesis operator $V_f^*$ has the form \cite[Theorem  2.6]{Cframe1}
\begin{equation}
V_f^*(F) = \int_G F(g)f_gdg.
\label{eq:CHS13}
\end{equation}

As evident from the above description, phase space signal processing involves integrals, and thus some form of discretization is required. 
\textcolor{black}{The common approach is to sample points from $G$ to construct a discrete frame version $\{f_{g^k}\}_{k=1}^{\infty}$ of the continuous frame (e.g., as in \cite{Time_freq,Ten_lectures}). For the discrete frame, the synthesis operator  reads, for $(F_k)_k\in l^2$, 
\begin{equation}
 V_{\{f_{g^k}\}_n}^*\{F_k\}_k = \sum_{k=1}^{\infty} f_{g^k} F_k. 
 \label{DFAO}
\end{equation}
Note that (\ref{DFAO}) looks like a quadrature approximation of (\ref{eq:CHS13}). However, in the standard continuous-to-discrete frame approach, the points $g^k$ are not chosen for approximating (\ref{eq:CHS13}). Rather, $g^k$ are chosen so that the discrete system $\{f_{g^k}\}_{n=1}^{\infty}$ satisfies the discrete frame inequality. Hence, the discrete frame is related to the continuous one by the fact that $f_{g^k}$ are sampled from $f_{(\cdot)}$, not by some approximation requirement. In this paper we take a different route to discretization, requiring that (\ref{DFAO}) approximates (\ref{eq:CHS13}). Let us call the latter discretization approach the \emph{quadrature approach}.}

\textcolor{black}{There is an advantage in the quadrature approach over the discrete frame approach when working with highly redundant continuous frames. To illustrate the idea, consider the STFT, where $G=\RR^2$ is the time-frequency plane. Suppose that we extend $G$ by adding to the time and frequency axes $t,\w$ a third axis $c$ that controls the time width of the window. To discretize the resulting continuous frame $f_{t,\w,c}$ to a discrete frame, one may simply choose to fix the window width axis to one single value $c=c^0$, and sample a time-frequency 2D grid $(t^n,\w^m,c^0)_{n,m}$. Indeed, such an approach would result in a standard discrete STFT, which is a discrete frame. However, the information along the third axis is lost in this discretization. In fact, nothing in the continuous-to-discrete frame approach forces the discretization to faithfully represent the whole continuous phase space. In the quadrature approach, our goal is to discretize the continuous frames more faithfully, sampling uniformly all the feature directions in phase space.}

\subsection{Randomized quadrature approximations of continuous frames}
\label{Randomized quadrature approximations of continuous frames}

\textcolor{black}{Our motivation is hence to discretize highly redundant continuous frames in the quadrature approach. In such situations, the dimensionality of phase space is higher than that of the signal space, and hence using a randomized discretization makes sense. Our approach is motivated by randomized methods in finite-dimensional numerical linear algebra, which are a prominent approach for dealing with high dimensional data~\cite{Mahoney11,Woodruff14,YMM16,DKM06,DM05,CW17}. The goal in this paper is to develop a similar randomized theory in an infinite dimensional setting in general separable Hilbert spaces, namely, in the phase space signal processing setting.}

Randomized algorithms in a context of continuous frames were presented in the past.
\textcolor{black}{\emph{Relevant sampling} is a line of work in which  integral transforms are randomly discretized  \cite{relevantS1,relevantS2,relevantS3,relevantS4}. While the goal in our approach is to approximate the continuous frame with a quadrature sum, the goal in relevant sampling is to sample discrete frames from  continuous frames.
}

We summarize our construction as follows.

\paragraph{Signal processing in phase space.}
Let $V_f:\cH\rightarrow L^2(G)$ be the analysis operator of a continuous frame, and $S_f=V_f^*V_f$ be the frame operator. Since $S_f^{-1}V_f^*V_f=V_f^*V_fS_f^{-1}$ is the identity $I$, we consider the following two formulations of signal processing in phase space. \emph{Synthesis phase space signal processing} is defined by the pipeline
\begin{equation}
s\mapsto V_f^* T r\circ \big(V_f[S_f^{-1}s]\big),
\label{eq:PSSPS22}
\end{equation}
and \emph{analysis phase space signal processing} is defined by
\begin{equation}
s\mapsto S_f^{-1}V_f^* T r\circ \big(V_f[s]\big).
\label{eq:PSSPA2}
\end{equation}
Here, $T$ is a bounded operator in $L^2(G)$ and $r:\CC\rightarrow\CC$ is a nonlinearity. The pipelines (\ref{eq:PSSPS22}) and (\ref{eq:PSSPA2}) can be seen as working with the canonical dual frame  $S_f^{-1}f$ \cite{Cframe1} either in the analysis or in the synthesis step. \textcolor{black}{We suppose that $S_f$ has an efficient discretization in the signal space $\cH$. Hence, we would like to find an efficient discretization of the rest of the pipeline, namely, of $V_f^* T r\circ \big(V_f[s]\big)$.}

\paragraph{Mote Carlo signal processing in phase space.}
We study a Monte Carlo approximation of signal processing in phase space based on the pipelines (\ref{eq:PSSPS22}) and (\ref{eq:PSSPA2}).
We first consider a Monte Carlo approximation of synthesis. For $F\in L^2(G)$, under certain assumptions given in Subsection \ref{Monte Carlo synthesis_}, we consider the approximation of the synthesis operator 
by
\begin{equation}
V_f^*F = \int_G F(g)f_gdg \approx \frac{C}{K}\sum_{k=1}^K F(g^k)f_{g^k}.
\label{eq:PSSPS22MC01}
\end{equation}
Here, $\{g^k\}_{k=1}^K\subset G$ is a finite set of independent random sample points, and $C$ is a normalization constant. 

Using this approximation, in Section \ref{Monte Carlo signal processing in phase space} we study the approximation rate of stochastic signal processing in phase space (\ref{eq:PSSPS22}) and (\ref{eq:PSSPA2}).
The first version of the approximation reads, for the synthesis and analysis  formulations respectively,
\begin{equation}
s\mapsto \frac{C}{K}  \sum_{k=1}^K \big(T(r\circ V_f[S_f^{-1}s])\big)(g^k)f_{g^k},
\label{eq:PSSPA2MC2AA}
\end{equation}
\begin{equation}
s\mapsto \frac{C}{K}  S_f^{-1}\sum_{k=1}^K \big(T(r\circ V_f[s])\big)(g^k)f_{g^k}.
\label{eq:PSSPA2MC2BA}
\end{equation}
Under some general assumptions, we also approximate the signal processing pipelines (\ref{eq:PSSPS22}) and (\ref{eq:PSSPA2}) when $T$ in an integral operator  defined by
\[TF(g) = \int_G R(g,g')F(g')dg',\]
where $R:G^2\rightarrow\CC$ \textcolor{black}{(Definition \ref{phase_space_operator})}. 
The synthesis and analysis approximations in this case read respectively
\begin{equation}
 s\mapsto \frac{C}{KL}  \sum_{k=1}^K\sum_{m=1}^L R(g^k,y^m)r\big(V_f[S_f^{-1}s](f_{y^m})\big)f_{g^k},
\label{eq:PSSPA2MC2AB}
\end{equation}
\begin{equation}
 s\mapsto \frac{C}{KL}  S_f^{-1}\sum_{k=1}^K\sum_{m=1}^L R(g^k,y^m)r\big(V_f[s](f_{y^m})\big)f_{g^k}.
\label{eq:PSSPA2MC2BB}
\end{equation}
Here, $\{g^k\}_{k=1}^K,\{y^m\}_{m=1}^L\subset G$  are two  finite sets of independent random sample points.

Methods (\ref{eq:PSSPA2MC2AB}) and (\ref{eq:PSSPA2MC2BB}) are useful for integral operators. Methods (\ref{eq:PSSPA2MC2AA}) and (\ref{eq:PSSPA2MC2BA}) are useful when the samples $\big(Tr\circ V_f[s]\big)(g^k)$ can be computed using some other samples $V_f[s](y^k)$ of $V_f[s]$,  which is the case for \textcolor{black}{multiplicative} and diffeomorphism operators (see Subsection \ref{Stochastic diffeomorphism operator}).

\paragraph{Summary of our main results.}
\begin{itemize}
	\item
	We prove the convergence of the Monte Carlo methods (\ref{eq:PSSPA2MC2AA})--(\ref{eq:PSSPA2MC2BB}) as the number as samples increase, and also introduce non-asymptotic error bounds. 
	When considering discrete signals of resolution/dimension $M$, embedded in the continuous signal space,
	the error in the stochastic method is of order $O(\sqrt{M/K})$, where $K$ is the number of Monte Carlo samples.
	\item
	\textcolor{black}{The computational complexity of our method does not depend on the dimension of phase space for a rich class of signal processing pipelines. This allows approximating highly redundant continuous frames efficiently  using sample points which are well spread in all directions in phase space.}
	\item
	\textcolor{black}{As a toy application of the theory, we show how to increase the expressive capacity of the 2D time-frequency phase space by adding a third axis, and use the construction in a phase vocoder scheme.}
\end{itemize}

\textcolor{black}{The proofs in this paper are inspired by the constructions in standard Monte Carlo theory (see, e.g., \cite[Section 2]{QMC0}), adapted to infinite dimensional Hilbert spaces.}

\section{Background: harmonic analysis in phase space}

In this section we review the theory of continuous frames, and give the two examples: STFT and CWT. 
 By convention, all Hilbert spaces in this paper are assumed to be separable. 
The Fourier transform $\cF$ is defined 
 with the following normalization
\begin{equation}
[\cF s](\w) =\hat{s}(\w)= \int_{\RR}s(t)e^{-2\pi i \w t}dt, \quad [\cF^{-1}\hs](t) = \int_{\RR}\hs(\w)e^{2\pi i \w t}d\w.  
\label{eq:Fourier_trans}
\end{equation} 
We denote the norm of a vector $v$ in a Banach space $\mathcal{B}$ by $\norm{v}_{\mathcal{B}}$. For a measure space $\{G,\mu\}$, we denote interchangeably by 
\[\norm{f}_p= \norm{f}_{L^p(G)} = \Big( \int_G \abs{f(g)}^p d\mu(g) \Big)^{1/p}\]
the $p$ norm of the signal $f\in L^p(G)$, where $1\leq p < \infty$, and denote interchangeably
\[\norm{f}_{\infty} = \norm{f}_{L^{\infty}(G)}= {\rm ess}\ {\rm sup}_{g\in G}\abs{f(g)}\] 
for $f\in L^{\infty}(G)$. \textcolor{black}{We note that an equality between two $L^p$ functions is by definition an almost-everywhere equality.}  \textcolor{black}{We denote the induced operator norm of operators in Banach spaces using the subscript of the Banach space, e.g., for a  bounded linear operator $T:L^p(G)\rightarrow L^p(G)$, we denote $\norm{T}_{p}$. When we want to emphasize that the norm is an operator norm, we also denote $\norm{T}_{p\rightarrow p}$.} 

\subsection{Continuous frames}


The following definitions and claims are from \cite{Cframe1} and \cite[Chapter 2.2]{Fuhr_wavelet}, with notation adopted from the later.

\begin{definition}
\label{CSSframe}
Let $\cH$ be a Hilbert space, and $(G,\mathcal{B},\mu)$ a locally compact topological space with Borel sets $\mathcal{B}$, and $\sigma$-finite Borel measure $\mu$.
 Let $f:G\rightarrow \cH$ be a weakly measurable mapping, namely for every $s\in\cH$
\[g\mapsto \ip{s}{f_g}\]
is a measurable function $G\rightarrow\CC$.
For any $s\in\cH$, we define the \emph{coefficient function}
\begin{equation}
V_f[s]:G\rightarrow \CC \quad , \quad V_f[s](g)=\ip{s}{f_g}_{\cH}.
\label{eq:CSS2frame}
\end{equation}
\begin{enumerate}
	\item
	We call $f$ a \emph{continuous frame}, if $V_f[s]\in L^2(G)$ for every $s\in\cH$, and there exist constants $0<A\leq B<\infty$ such that
	\begin{equation}
	A\norm{s}_{\cH}^2 \leq \norm{V_f[s]}_2^2 \leq B\norm{s}_{\cH}^2
	\label{eq:FB}
	\end{equation}
	for every $s\in\cH$.
	\item
	If it is possible to choose $A=B$, $f$ is called a \emph{tight frame}.
	\item
	We call $\cH$ the \emph{signal space},
	 $G$ \emph{phase space}, $V_f$ the \emph{analysis operator}, and $V_f^*$ the \emph{synthesis operator}.
	\item
	We call the frame $f$ \emph{bounded}, if there exist a constant $0<C\in\RR$ such that
\[\forall g\in G\ , \norm{f_g}_{\cH}\leq C.\]
\item
We call $S_f=V_f^*V_f$ the \emph{frame operator}, and $Q_f=V_f V_f^*$ the \emph{Gramian operator}.
\item
	\textcolor{black}{We call $f$ a \emph{Parseval} continuous frame, if $V_f$ is an isometry between $\cH$ and $L^2(G)$.}
\end{enumerate}
\end{definition}

\begin{remark}
\textcolor{black}{A frame is Parseval if and only if the frame bounds cane be chosen as $A=B=1$.}
\end{remark}

For the  closed form formula of the synthesis operator $V_f^*$, we recall the notion of weak vector integrals, also called Pettis integral,  introduced in \cite{Weak_Integral}.

\begin{definition}
\label{D:week_vec_int}
Let $\cH$ be a separable Hilbert space, and $G$ a measure space. Let $v:G\rightarrow \cH$ be a mapping such that the mapping
 $s\mapsto\int_G \ip{s}{v(g)}dg$ 
is continuous in $s\in\cH$. Then the \emph{weak vector integral} (or weak $\cH$ integral) is defined to be the vector 
$\int^{\rm w}_G v(g)dg \in \cH$ 
such that
\[\forall s\in\cH \  \quad \int_G \ip{s}{v(g)}dg=\ip{s}{\int^{\rm w}_G v(g)dg}.\]
The existence of such a vector is guaranteed by Riesz representation theorem. In this case, $v$ is called a \emph{weakly integrable function}.
\end{definition}

Given a continuous frame, the synthesis operator can be written by \cite[Theorem  2.6]{Cframe1}
\begin{equation}
V_f^*[F] = \int^{\rm w}_G F(g)f_g dg.
\label{eq:inver_proj}
\end{equation}

\begin{definition}
The \emph{frame kernel} $K_f:G^2\rightarrow\CC$ is defined by
\begin{equation}
K_f(g,g')=\ip{f_g}{f_{g'}} = V_f[f_g](g').
\label{eq:a4}
\end{equation}
\end{definition}

The following result is taken from \cite[Proposition 2.12]{Fuhr_wavelet}.
\begin{proposition}
\label{Gram_Ker}
The Gramian operator $Q_f$ is an integral operator with kernel $K$. Namely, for every $F\in L^2(G)$
\begin{equation}
[Q_fF](g')  = \int_G F(g)  K_f(g,g')dg.
\label{eq:a3}
\end{equation}
\end{proposition}





For a Parseval frame,
the image space $V_f[\cH]$ is a reproducing kernel Hilbert space, \textcolor{black}{with kernel} $K_f(g,\cdot)$, and the orthogonal projection upon $V_f[\cH]$ is given by the Gramian operator $Q_f=V_fV_f^*$.




\subsection{Examples}

\textcolor{black}{An important class of Parseval frames are wavelet transforms based on square integrable representations, which we call in this paper simply wavelet transforms. We refer the reader to  \cite[Chapters 2.3--2.5]{Fuhr_wavelet}, and the classical papers \cite{gmp0,gmp}.} 
The wavelet system in the general theory is generated by fixing one signal $f\in \cH$, that is typically called the \emph{mother wavelet} or the \emph{window function}, and applying on it a set of transformations $\{\pi(g)f\ |\ g\in G\}$, parameterized by a locally compact topological group $G$. The Haar measure is taken in $G$, and $\pi:G\rightarrow {\cal U}(\cH)$ is assumed to be a \emph{square integrable representation},  where ${\cal U}(\cH)$ is the group of unitary operators in $\cH$.  

The wavelet transform is defined by
\[V_f:\cH\rightarrow L^2(G) \quad , \quad V_f[s](g)=\ip{s}{\pi(g)f}.\]
For any two mother wavelets $f_1$ and $f_2$, the reconstruction formula of the wavelet transform is given by
\[s=\frac{1}{\ip{Af_2}{Af_1}}V_{f_2}^*V_{f_1}(s) =\frac{1}{\ip{Af_2}{Af_1}}\int^{\rm w}_G V_{f_1}[s](g)\pi(g)f_2 \ dg.\]
Here, $A$ is a special positive operator in $\cH$, called the \emph{Duflo-Moore operator}, uniquely defined for every square integrable representation $\pi$, that determines the normalization of windows.

\paragraph{The short time Fourier transform.} 
The following construction is taken from \cite{Time_freq}.
Consider the signal space $L^2(\RR)$. Let $\mathcal{T}:\RR\rightarrow {\cal U}(L^2(\RR))$ be the translation in $L^2(\RR)$, defined for $x\in\RR$ and $f\in L^2(\RR)$ by $[\mathcal{T}(x)f](t)=f(t-x)$. \textcolor{black}{Let $\mathcal{M}:\RR\rightarrow {\cal U}(L^2(\RR))$ be the modulation} in $L^2(\RR)$, defined for $\w\in\RR$ and $f\in L^2(\RR)$ by $[\mathcal{M}(\w)f](t)=e^{2  \pi i \w t}f(t)$. 
%
Denote $\pi(x,\w)= \mathcal{T}(x)\mathcal{M}(\w)$.  
For a normalized window $f$, the mapping
\[\RR^2\ni(x,\w)\mapsto \pi(x,\w)f\]
is a Parseval continuous frame, with the standard Lebesgue measure of the phase space $\RR^2$. 
The resulting transform $V_f[s](x,\w)=\ip{s}{\pi(x,\w)f}$ is called the \emph{Short Time Fourier Transform} (STFT).

\subsubsection{The 1D continuous wavelet transform}


The following construction is taken from \cite{Cont_wavelet_original,Ten_lectures}.
Consider the signal space $L^2(\RR)$, and the translation $\mathcal{T}$ as in the STFT. \textcolor{black}{Let $\mathcal{D}:\RR\setminus\{0\}\rightarrow {\cal U}(L^2(\RR))$ be the dilation in $L^2(\RR)$, defined for $\tau\in\RR\setminus\{0\}$ and $f\in L^2(\RR)$ by $[\mathcal{D}(\tau)f](t)=\frac{1}{\sqrt{\abs{\tau}}}f(\frac{t}{\tau})$.} The set of transformations
\begin{equation}
{\cal A}=\{ \mathcal{T}(x)\mathcal{D}(\tau)\ |\ (x,\tau)\in\RR\times (\RR\setminus\{0\})\}
\label{eq:Affine_group1}
\end{equation}
 is closed under compositions. We can treat ${\cal A}$ as a group of tuples $\RR\times (\RR\setminus\{0\})$, with group product derived from the compositions of operators in (\ref{eq:Affine_group1}). The group ${\cal A}$ is called the 1D \emph{affine group}. The mapping
\[\pi(x,\tau)=\mathcal{T}(x)\mathcal{D}(\tau)\]
 is a square integrable representation, with Dulfo-Moore operator $A$ defined by 
$
[\cF A \cF^*\hf](z) = \frac{1}{\sqrt{\abs{z}}}\hf(z)
$, 
where $\cF$ is the Fourier transform.
 The resulting wavelet transform is called the \emph{Continuous Wavelet Transform} (CWT).

Next, we show how the CWT atoms are interpreted as time-frequency atoms, and the CWT is interpreted as a time-frequency transform.
Here, by changing variable $\w=\frac{1}{\tau}$, we obtain the Parseval frame 
\[\{\pi'(x,\w)f\}_{(x,\w)\in\RR\times (\RR\setminus\{0\})}\]
based on the representation 
$\pi'(x,\w)=\mathcal{T}(x)\mathcal{D}(\w^{-1})$. 
The parameter $\w$ is interpreted as frequency. The mapping $\pi'$ is a representation of the 1D affine group with the new parameterization $\w=\frac{1}{\tau}$, in which the Haar measure is the standard Lebesgue measure of $\RR\times (\RR\setminus\{0\})$.

\section{Elements of stochastic signal processing in phase space}
\label{Phase space Monte Carlo method for coherent state systems}

In this section we develop basic approximation results that will be used later in the paper to  bound the approximation error between the signal processing pipelines (\ref{eq:PSSPS22}) and (\ref{eq:PSSPA2}), and their   stochastic approximations (\ref{eq:PSSPS22MC01})--(\ref{eq:PSSPA2MC2BB}). 


\subsection{Phase space operators}

We start by defining integral operators in the coefficient space. 
%
%

\textcolor{black}{
\begin{definition}
\label{phase_space_operator}
Let $T$ be a bounded linear operator in $L^2(G)$, where $G$ is a locally compact topological space with $\sigma$-finite Borel measures.
\begin{enumerate}
	\item
	We call $T$ a \emph{phase space integral operator (PSI operator)} if there exists a measurable function $R:G\times G\rightarrow\CC$ 
	with $R(\cdot,g)\in L^2(G)$ for almost every $g\in G$, such that for every $F\in L^2(G)$
\begin{equation}
TF = \int_{G} R(\cdot,g)F(g)dg.
\label{eq:PSO}
\end{equation}
\item
 A phase space integral operator $T$ is called \emph{uniformly square integrable}, if there is a constant $D>0$ such that for almost every $g\in G$
 \begin{equation}
 \label{eq:USI}
  \norm{R(\cdot,g)}_{L^2(G)}=\sqrt{\int_{G}  \abs{R(g',g)}^2 dg'} \leq D.   
 \end{equation}
\end{enumerate}
\end{definition}
}

\textcolor{black}{
\begin{example}
\label{Ex:Qf_bound}
The Gramian operator $Q_f$ of a continuous frame is a phase space operator by Proposition \ref{Gram_Ker}, with $\norm{Q_f}_2\leq B$. If $f$ is bounded, with bound $\norm{f_g}_{\cH}\leq C$, then  $Q_f$ is uniformly square integrable with bound
\[\norm{K_f(g,\cdot)}_{2} = \sqrt{\int_{G}  \abs{K_f(g,g')}^2 dg'} =  \sqrt{\int_{G}  \abs{V_f[f_g](g')}^2 dg'} \leq B^{1/2}C. \]
\end{example}
}


\subsection{Sampling in phase space}
\label{Sampling in phase space}

Let $F\in L^2(G)$, and let $f$ be a continuous frame.
The phase space $G$ in general \textcolor{black}{does not have finite measure}, and thus uniform sampling is not defined on $G$. However, when $G$ has infinite measure, functions $F\in L^2(G)$ must decay in some sense ``at infinity'', so it is possible  to restrict our sampling to a compact domain in  $G$, in which $F$ has most of its energy. 
More accurately, since $G$ is $\sigma$-finite, it is the disjoint union of at most countably many sets of finite measure. 
 Namely, there are disjoint measurable sets $X_n$ of finite measure, with $\bigcup_{n\in\NN}X_n= G$, such that for every $F\in L^2(G)$
\begin{equation}
\norm{F}_{L^2(G)}^2 = \int_G \abs{F(g)}^2dg = \sum_{n\in\NN}\int_{X_n} \abs{F(g)}^2dg=\sum_{n\in\NN}\norm{F}_{L^2(X_n)}^2.
\label{eq:sigint0}
\end{equation}
Denote $G_n=\bigcup_{j=1}^n X_n$, and note that $\bigcup_{n\in\NN}G_n= G$. Now, (\ref{eq:sigint0}) is equivalent to
\[\norm{F}_{L^2(G)}^2 = \lim_{n\rightarrow\infty}\norm{F}_{L^2(G_n)}^2.\]
Thus, for every $\e>0$, there exists an indicator function $\psi_{\e}$ (that depends on $F$) of a measurable set of finite measure $\norm{\psi_{\e}}_1$,  such that
\begin{equation}
\norm{\psi_{\e} F - F}_2<\e.
\label{eq:env000}
\end{equation}
In our analysis, we allow more general forms of \emph{envelopes} $\psi_{\epsilon}$.
\textcolor{black}{
\begin{definition}
\label{def:env}
An \emph{envelope} is a positive $\psi\in L^1(G)\cap L^{\infty}(G)$   satisfying  $\norm{\psi}_{\infty}\leq 1$. 
\end{definition}
}
Given an envelope $\psi$, samples can be drawn from $G$ according to the probability  density $\frac{\psi(g)}{\norm{\psi}_1}$. 
In the following analysis we fix an envelope $\psi_{\e}=\psi$ independently of a specific function $F\in L^2(G)$. This is the common approach in classical signal processing, where a compact frequency band $[a,b]$ is predefined independently of a specific signal. It is implicit that we can only treat signals having most of their frequency energy in $[a,b]$. Any frequency information outside of $[a,b]$ is lost or projected into the band. 
\textcolor{black}{In Section \ref{Discrete stochastic phase space signal processing}, and specifically Definition \ref{D:linear area discretizable}, we study the support of $\psi$ required to capture most of the energy of discrete signals.}


\subsection{Input sampling in phase space operators}
\label{Monte Carlo method of phase space functions}

Given a PSI operator $T$ with kernel $R$, in this subsection we sample the input variable $g$ of $R(g',g)$, and keep the output variable $g'$ continuous. In Subsection \ref{Monte Carlo signal processing in phase space} we show that sampling the output variable $g'$ is a special case of the framework developed in this subsection.
Let $\psi$ be an envelope, and $g\in G$ be a random sample according to the probability distribution $\frac{\psi(g)}{\norm{\psi}_1}$. Define the random rank one operator \textcolor{black}{$T^{\psi,1}$}, applied on $F\in L^2(G)$, by
\[g\mapsto (T^{\psi,1}F)(g)=\norm{\psi}_1 R(\cdot,g) F(g).\]
We also denote 
$T^{\psi,1}F(g';g)=\norm{\psi}_1 R(g',g) F(g)$, 
where $g'$ is the variable of the output function $T^{\psi,1}F$. 
Next, we define the Monte Carlo approximation of $TF$ as a sum of independent $T^{\psi,1}F$ vectors. 

\begin{definition}[Input Monte Carlo phase space operator]
\label{Input Monte Carlo phase space operator}
Let $T$ be a PSI operator in $L^2(G)$ (Definition \ref{phase_space_operator}), \textcolor{black}{$\psi$ an envelope,} $F\in L^2(G)$, and $K\in\NN$. Let $G_k=G$, $k=1,\ldots,K$, be $K$ copies of  $G$,  and let $\{g^k\}_{k=1}^K$ denote a random sample from $G_1\times\ldots\times G_K$ with the probability distribution $\prod_{k=1}^K\frac{\psi(g^k)}{\norm{\psi}_1}$. Let $T^{\psi,1}_kF: G_k\rightarrow L^2(G)$ be
 the random vectors  defined for $g^k\in G_k$ by $[T^{\psi,1}_kF](g^k) =[T^{\psi,1}F](g^k)$, $k=1,\ldots, K$.
Define the random vector $T^{\psi,K}F: G_1\times\ldots\times G_K \rightarrow L^2(G)$ by
\begin{equation}
    \label{TEMP012}
    [T^{\psi,K}F](g';g^1,\ldots,g^K):=\frac{1}{K}\sum_{k=1}^K [T^{\psi,1}_kF](g';g^k) = \frac{\norm{\psi}_1}{K}\sum_{k=1}^K  R(g', g^k) F(g^k).
\end{equation}
We call $T^{\psi,K}F$ the \emph{Monte Carlo phase space integral operator} applied on $F$ and based on $K$ samples, approximating $TF$.
\end{definition}
\textcolor{black}{When the envelope $\psi$ is fixed throughout the analysis, we often denote interchangeably $T^KF=T^{\psi,K}F$. In the following we fix an envelope $\psi$.}
\textcolor{black}{
\begin{remark}
Note that $T^kF$ is a random variable -- a function with $(\{g^k\}_{k=1}^K,g')$  as the variable. Thus, we can sample $L^2(G)$ vectors in (\ref{TEMP012}), which are equivalence classes of functions, without requiring any continuity assumption on 
$R(\cdot,\cdot\cdot)F(\cdot\cdot)$. Indeed, $T^kF$ is defined up to a set of tuples $(\{g^k\}_{k=1}^K,g')$ of measure zero.
\end{remark}
}

The expected value $\mathbb{E}(T^K F)\in L^2(G)$ of $T^K F$ is a function in  $L^2(G)$, defined by
\textcolor{black}{
\[\mathbb{E}(T^K F) = \int_{G^K} [T^KF]\big((\cdot);g^1,\ldots,g^K\big) \frac{\psi(g^1)}{\norm{\psi}_1}dg^1\ldots \frac{\psi(g^K)}{\norm{\psi}_1}dg^K.\]
}
  \textcolor{black}{We define the variance $\mathbb{V}(T^K F)$ as the  integral 
	\[\mathbb{V}(T^K F) = \int_{G^K} \abs{[T^KF]\big((\cdot);g^1,\ldots,g^K\big)-\mathbb{E}^{\rm w}(T^K F)}^2 \frac{\psi(g^1)}{\norm{\psi}_1}dg^1\ldots \frac{\psi(g^K)}{\norm{\psi}_1}dg^K.\]
	}
\textcolor{black}{Given an envelope $\psi$, by abuse of notation, we also denote by $\psi$ the multiplicative operator
\[\psi: L^2(G)\rightarrow L^2(G) \quad , \quad [\psi F](g)=\psi(g)F(g).\]
}


\begin{proposition}
\label{Var_approx1}
Let $T$ be a PSI operator, and $F\in L^2(G)$. Then,
\begin{enumerate}
	\item the expected value $\mathbb{E}(T^KF)$ is in $L^2(G)$ and satisfies
	\begin{equation}
	\mathbb{E}(T^KF)=T(\psi F),
	\label{eq:pointEK}
	\end{equation}
	\item
	if  $T$ is a uniformly square integrable PSI operator, with bound $D$, then 
\begin{equation}
\mathbb{V}(T^KF)=\frac{1}{K}\mathbb{V}(T^1F) \in L^{1}(G),
\label{eq:Var_approx1}
\end{equation}
and
$\norm{\mathbb{V}(T^KF)}_1 \leq \frac{1}{K}\norm{\psi}_1D^2\norm{F}_2^2$.
\end{enumerate}
\end{proposition}

The proof of \textcolor{black}{Item} 1 of Proposition \ref{Var_approx1} follows directly from the definition of PSI operators (Definition \ref{phase_space_operator}). Indeed, for $T^1F$,
\[\mathbb{E}(T^1F)(g') = \int_{G} \norm{\psi}_1 R(g',g) F(g) \frac{\psi(g)}{\norm{\psi}_1}dg = T(\psi F)(g'),\]
and for $T^K$ we use linearity.
The proof of Item 2 of Proposition \ref{Var_approx1} is in the next subsection.  
We next bound the average square error in approximating $T[\psi F]$ by $T^kF$. 

\begin{proposition}
\label{Input_proposition}
Let $f$ be a continuous frame, and $T$ a uniformly square integrable PSI operator with bound $D$. Then
	\[\mathbb{E}\Big(\norm{T^KF-T(\psi F)}_2^2 \Big)\leq  \frac{\norm{\psi}_1}{K} D^2\norm{F}^2_2.\]
\end{proposition}

\begin{proof}
By the Fubini–Tonelli theorem,  and Proposition \ref{Var_approx1}, 
\[
\begin{split}
 & \mathbb{E}\Big( \norm{T^KF-T(\psi F)}_2^2 \Big) \\
& =\int_{G} \int_G \abs{[T^KF](g';g^1,\ldots,g^K)-T(\psi F)(g')}^2 \\
& \quad\quad\quad\quad\quad\quad \quad\quad\quad\quad\quad\quad \quad\quad\quad  dg' \psi(g^1)/\|\psi\|^1_1\cdots\psi(g^K)/\|\psi\|^K_1 dg^1\ldots dg^K \\
& =\norm{\mathbb{V}(T^kF)(g')}_1 \leq  \frac{\norm{\psi}_1}{K} D^2\norm{F}^2_2.
\end{split}
\]

\end{proof}
The \textcolor{black}{expected error} in Proposition \ref{Input_proposition} is
  pointwise in $F$. We note that \textcolor{black}{an operator expected error bound} of the form ``$\mathbb{E}\norm{T^K-T\big(\psi (\cdot)\big)}^2_{2}= O(\frac{\norm{\psi}_1}{K})$''
 like in the finite dimensional matrix operator case \cite{MatrixConcentration} is not possible. Indeed, for any sample set $\{g^k\}_{k=1}^K$ there is a normalized function $F\in L^2(G)$ supported in $G\setminus \{g^k\}_{k=1}^K$, so $T^KF=0$ and $\norm{T^K-T\big(\psi (\cdot)\big)}_2 \geq \norm{T^KF-T(\psi F)}_2=\norm{T(\psi F)}_2$. We thus focus in this paper on pointwise error estimates.
  
The following is an important special case of Propositions \ref{Var_approx1} and \ref{Input_proposition}.
\textcolor{black}{
\begin{corollary}
\label{Ex_Q_E}
Let $T=Q_f=V_f V^*_f$ be the Gramian operator of a bounded continuous frame with $\norm{f_g}_{\cH}\leq C$ and upper frame bound $B$. Let $K_f$ be the frame kernel. Then, we have
\begin{equation}
Q_f^K F= \frac{\norm{\psi}_1}{K} \sum_{k=1}^K F(g^k)K_f(g^k,\cdot),
\label{eq:R1P}
\end{equation}
and
\[\mathbb{E}(Q_f^K F) = Q_f (\psi F).\]
Moreover, $Q_f$ is a uniformly square integrable PSI operator with bound $B^{1/2}C$, 
 and
\[\mathbb{E}\Big(\norm{Q_f^KF-Q_f(\psi F)}_2^2 \Big)\leq  \frac{\norm{\psi}_1}{K} BC^2\norm{F}^2_2.\]
\end{corollary}
\begin{proof}
By Example \ref{Ex:Qf_bound}, $Q_f$ is a uniformly square integrable PSI operator with bound $B^{1/2}C$.
The rest of the results follow from Propositions \ref{Var_approx1} and \ref{Input_proposition}. 
\end{proof}
}

\subsection{Proof of Proposition \ref{Var_approx1}}
\label{Proofs of Subsection a}

The proof is based on the following lemma.

\begin{lemma}
\label{E_var_monte0}
Let $T$ be a PSI operator with kernel $R(g',g)$, let $\psi$ be an envelope, and let $F\in L^2(G)$. Then the following holds.
\begin{enumerate}
	\item
	The expected value of $T^1F$  satisfies
	\begin{equation}
	\mathbb{E}(T^1F)=T(\psi F).
	\label{eq:pointE0}
	\end{equation}
\item
If $T$ is a uniformly square integrable PSI operator,  then $\mathbb{V}(T^1F)\in L^1(G)$, and
\begin{equation}
\mathbb{V}(T^1F)= \mathbb{E}\Big(\abs{T^1 F}^2\Big) - \abs{\mathbb{E}(T^1F)}^2.
\label{eq:E_var_monte02}
\end{equation}
Here, $\abs{T^1 F}^2$ is the function $(g;g')\mapsto\abs{\norm{\psi}_1 R(g',g) F(g)}^2$, and expected value is with respect to the random variable $g$.
\item
If $T$ is a uniformly square integrable PSI operator, with bound $D$,  then 
\begin{equation}
\label{eq:prop14_3}
    \norm{\mathbb{V}(T^1F)}_1 \leq \norm{\psi}_1D^2\norm{F}_2^2.
\end{equation}
\end{enumerate}
\end{lemma}

\begin{proof}
\textcolor{black}{Part 1 was shown in the discussion below Proposition \ref{Var_approx1}. For parts 2 and 3, we write
\begin{equation}
   \begin{split}
  \mathbb{V}(T^1F)(g') =&   
  \int_{G} \abs{[T^1F]\big(g';g\big)}^2 \frac{\psi(g)}{\norm{\psi}_1}dg \\
  & -2{\rm Real} \int_{G} [T^1F]\big(g';g\big)\overline{\mathbb{E}(T^1 F)(g')} \frac{\psi(g)}{\norm{\psi}_1}dg \\
  & + \int_{G} \abs{\mathbb{E}(T^1 F)(g')}^2 \frac{\psi(g)}{\norm{\psi}_1}dg.
\end{split} 
\label{eq:p1}
\end{equation}
We first use the Fubini–Tonelli theorem to prove integrability with respect to $g'$ of the first term of (\ref{eq:p1}). By the fact that $T$ is uniformly square integrable  (Definition \ref{phase_space_operator}.2), and by $\norm{\psi}_{\infty}\leq 1$,
\begin{equation}
   \begin{split}
  \int_{G}\int_{G} \abs{[T^1F]\big(g';g\big)}^2 dg'\frac{\psi(g)}{\norm{\psi}_1} dg & =\norm{\psi}_1 \int_{G}\int_{G} \abs{R(g',g)}^2 dg' \abs{F(g)}^2\psi(g) dg\\
  & \leq \norm{\psi}_1 D^2\norm{F}_2^2,
\end{split} 
\label{eq:temp345d}
\end{equation}
so $\int_{G} \abs{[T^1F]\big(g';g\big)}^2 \frac{\psi(g)}{\norm{\psi}_1}dg \in L^1(G)$ with respect to $g'$.
For the second term of (\ref{eq:p1}), we have
\[
\begin{split}
&\int_{G}\int_{G} [T^1F]\big(g';g\big)\overline{\mathbb{E}(T^1 F)(g')} \frac{\psi(g)}{\norm{\psi}_1}dg dg'\\
& = \int_{G}\int_{G}  R(g',g)F(g) \psi(g) dg  \overline{T(\psi F)(g')}dg' = \norm{\mathbb{E}(T^1 F)}_2^2.
\end{split}
\]
This leads to (\ref{eq:E_var_monte02}).
}
\textcolor{black}{
By the non-negativity of the integrand in the definition of $\mathbb{V}(T^1F)(g')$, we can write 
$\norm{ \mathbb{V}(T^1F)}_1 = \int_{G}\mathbb{V}(T^1F)(g') dg'$,
so by (\ref{eq:E_var_monte02}) and (\ref{eq:temp345d}), we get (\ref{eq:prop14_3}).
}

\end{proof}

\begin{proof}[Proof of Proposition \ref{Var_approx1}]
Part 1 was shown in the discussion below Proposition \ref{Var_approx1}. Next we show Part 2.
By the Fubini–Tonelli theorem, we have
\begin{equation}
\begin{split}
& \norm{\mathbb{V}(T^KF)}_1 = \\
& \int_{G^K}\int_{G} \abs{\frac{1}{K}\sum_{k=1}^K [T^1_kF](g') - \mathbb{E}(T^KF)(g')}^2    dg'  \norm{\psi}_1^{-K}\psi(g^1)dg^1 \psi(g^K)dg^K
\end{split}
\label{eqinproof6r}
\end{equation}
When expanding the product in (\ref{eqinproof6r}),
we have the term
\[
\int_{G^K}\int_{G} \frac{1}{K^2}\sum_{k=1}^K \abs{[T^1_kF](g') - \mathbb{E}(T^KF)(g')}^2  dg'\norm{\psi}_1^{-K}\psi(g^1)dg^1\ldots \psi(g^K)dg^K, 
\]
and mixed terms, for $k\neq k'$,
\[
\begin{split}
 & \int_{G^K}\int_{G} \frac{1}{K^2}\Big([T^1_kF](g^k;g') - \mathbb{E}(T^KF)(g')\Big) \times  \\
 & \quad \quad 
 \overline{\Big([T^1_{k'}F](g^{k'};g') - \mathbb{E}(T^KF)(g')\Big)} dg' \norm{\psi}_1^{-K}\psi(g^1)dg^1\ldots \psi(g^K)dg^K \\
 &  =\int_{G}\int_{G} \overline{\Big([T^1_kF](g^{k'};g') - \mathbb{E}(T^KF)(g')\Big)} \frac{1}{K^2}  \times \\
 &  \quad \quad \quad
    \int_G\Big([T^1_kF](g^k;g') - \mathbb{E}(T^KF)(g')\Big) \norm{\psi}_1^{-1}\psi(g^k)dg^{k} \ dg'\  \norm{\psi}_1^{-1}\psi(g^{k'})dg^{k'},
\end{split}
\]
which are equal to zero, 
since
\[
\begin{split}
 & \int_{G}\Big([T^1_kF](g^k;g') - \mathbb{E}(T^KF)(g')\Big) \norm{\psi}_1^{-1}\psi(g^k)dg^{k} \\
 & =\int_{G}[T^1_kF](g^k;g')\norm{\psi}_1^{-1}\psi(g^k)dg^{k} - \mathbb{E}(T^1_k F)(g') = 0.
\end{split}
\]
Here, the fact that $\overline{\Big([T^1_{k'}F](g^{k'};(\cdot)) - \mathbb{E}(T^KF)(\cdot)\Big)}\in L^2(G)$ for a.e. $g^{k'}$,
and the fact that $\norm{\psi}_1^{-1}\psi(g^k)dg^k$ and $\norm{\psi}_1^{-1}\psi(g^{k'})dg^{k'}$ are probability measures,  
justify the above use of Fubini's theorem.

We thus have, by part 3 of Lemma \ref{E_var_monte0}, 
\[
\begin{split}
\norm{\mathbb{V}(T^KF)}_1 & =\int_{G^K}\int_{G} \frac{1}{K^2}\sum_{k=1}^K\abs{ \Big([T^1_kF](g^k;g') - \mathbb{E}(T^KF)(g')\Big)}^2 \\ 
 & \quad\quad\quad\quad\quad\quad\quad\quad\quad\quad\quad\quad dg'  \norm{\psi}_1^{-K}\psi(g^1)dg^1\ldots \psi(g^K)dg^K \\
 = & \frac{1}{K^2}\sum_{k=1}^K\int_{g^k}\int_{g'} \abs{ \Big([T^1_kF](g^k;g') - \mathbb{E}(T^KF)(g')\Big)}^2  dg'  \norm{\psi}_1^{-1}\psi(g^k)dg^k \\
 = &  \frac{1}{K^2}\sum_{k=1}^K\mathbb{V}(T^1_kF)  =  \frac{1}{K}\mathbb{V}(T^1_kF) \leq \frac{1}{K}\norm{\psi}_1D^2\norm{F}_2^2. 
\end{split}
\]

\end{proof}

\subsection{Monte Carlo synthesis}
\label{Monte Carlo synthesis_}

In this subsection, we use the results of Subsection \ref{Monte Carlo method of phase space functions} to define and analyze the Monte Carlo approximation of synthesis. 
\textcolor{black}{
\begin{definition}
Let $\psi$ be an envelope and $\{g^k\}_{k=1}^K$ random samples  as in Definition \ref{Input Monte Carlo phase space operator}.
Given $F\in L^2(G)$, the \emph{Monte Carlo synthesis}  $V_f^{*\psi,K}F$ is the random variable $G^K\rightarrow \cH$ defined as
\[V_f^{*\psi,K} F =\frac{\norm{\psi}_1}{K}\sum_{k=1}^K F(g^k)f_{g^k}.\]
\end{definition}
When the envelope $\psi$ is constant in the analysis, we often denote the Monte Carlo synthesis in short by $V_f^{* K}$.}
The following proposition formulates the Monte Carlo synthesis using the Monte  Carlo  PSI  operator $Q_f^K$ approximating the Gramian operator  $Q_f$ (Corollary \ref{Ex_Q_E}), and the frame operator $S_f$.

\begin{proposition}
\label{MCsynthesis}
	$V_f^{*K} F = S_f^{-1}V_f^*Q_f^K F.$
\end{proposition}

\begin{proof}
By linearity, it is enough to prove for $K=1$. By the fact that $S_f=V_f^*V_f$, 
  \textcolor{black}{
 \[\begin{split}
    S_f^{-1}V_f^*Q_f^1 F & = S_f^{-1}V_f^* \norm{\psi}_1F(g)K(g,\cdot) \\ & = \norm{\psi}_1F(g) S_f^{-1} V_f^*  V_f(f_g)= \norm{\psi}_1F(g) f_g = V_f^{*1} F. \end{split}\]}
 
\end{proof}

Next, we show that $V_f^{*K}F$ approximates $V_f^*[\psi F]$.

\begin{proposition}[Synthesis Monte Carlo approximation rate]
\label{Delta_approx3}
Let $f$ be a bounded continuous frame with frame bounds $A,B$, and $\norm{f_g}_{\cH}\leq C$. Let $\psi\in L^1(G)$ be an envelope. Then
	\[\mathbb{E}\Big( \norm{V_f^{*K}F-V_f^* [\psi F]}_{\cH}^2 \Big) \leq \frac{\norm{\psi}_1}{K} \frac{B}{A}C^2 \norm{F}_2^2.\]
\end{proposition}

\begin{proof}
 By Proposition \ref{MCsynthesis} and Lemma \ref{Lemm_PI} of Appendix \ref{Pseudo inverse of frame analysis and synthesis operators}
 \begin{equation}
   \begin{split}
\norm{V_f^{*K} F-V_f^*[\psi F]}_{\cH} &= \norm{S_f^{-1}V_f^*Q_f^K F-S_f^{-1}V_f^*Q_f[\psi F]}_{\cH}\\
& = \norm{V_f^+\Big(Q_f^K F-Q_f[\psi F]\Big)}_{\cH} \\
& \leq \norm{V_f^+}\norm{Q_f^K F-Q_f[\psi F]}_2  \leq A^{-1/2}\norm{Q_f^K F-Q_f[\psi F]}_2 .
\end{split}
\label{eq:SynthBound}
 \end{equation}
Indeed, by the frame bound $A^{1/2}\norm{s}_{\cH} \leq \norm{V_f[s]}_2$ for $s=V_f^+F$,
\[\norm{V_f^+F}_{\cH} \leq A^{-1/2}\norm{V_fV_f^+F}_2 =  A^{-1/2}\norm{P_{V_f(\cH)}F}_2 \leq  A^{-1/2}\norm{F}_2.\]
Now, the result follows from  Corollary \ref{Ex_Q_E}. 

\end{proof}

\section{\textcolor{black}{Stochastic phase space signal processing of continuous signals}}
\label{Monte Carlo signal processing in phase space}

In this subsection we formulate and analyze the  Monte Carlo approximations (\ref{eq:PSSPA2MC2AA})--(\ref{eq:PSSPA2MC2BB}) of the signal processing in phase space pipelines (\ref{eq:PSSPS22}) and (\ref{eq:PSSPA2}).  In Subsection \ref{Expected error in stochastic phase space signal processing} we bound the expected value of the error, and in Subsection \ref{Error bounds in high probability} we bound the concentration of measure of the error.

\subsection{Definition of stochastic phase space signal processing}

For Parseval frames (Definition \ref{CSSframe}.6), the frame operator is $S_f=I$, and hence signal processing in phase space takes the form $\mathcal{P}_{f,T,r}:= V_f^* T r\circ V_f$. We call $\mathcal{P}_{f,T,r}$ \emph{Parseval signal processing in phase space} even if $f$ is non-Parseval.
For non-Parseval frames, synthesis and analysis signal processing in phase space involve the multiplications $\mathcal{P}_{f,T,r}S_f^{-1}$ and $S_f^{-1}\mathcal{P}_{f,T,r}$ respectively. Since $\norm{S_f^{-1}}_2\leq A^{-1}<\infty$, it is enough to bound the error entailed by randomly approximating $\mathcal{P}_{f,T,r}$, and then multiply the bound by $\norm{S_f^{-1}}_2$ for non-Parseval pipelines. We hence focus only on Parseval signal processing in our analysis. 

As discussed in Subsection \ref{Sampling in phase space}, sampling in phase space requires enveloping. We hence formulate the following list of signal processing pipelines.
\begin{definition}
\label{SP_pipe}
Let $f$ be a continuous frame over the phase space $G$,  $T$ be a bounded operator in $L^2(G)$,  $r:\CC\rightarrow\CC$, and $\psi,\eta\in L^1(G)$ two envelopes. Let $s\in\cH$ denote a generic signal.
\begin{enumerate}
\item 
A \emph{signal processing pipeline} is defined  by
\begin{equation}
  \label{eq:syn_ana-1}  
  \mathcal{P}_{f,T,r}s = V_f^* T (r\circ V_f[s]).
\end{equation}
    \item 
   An \emph{output enveloped  signal processing pipeline} is defined  by
\begin{equation}
  \label{eq:syn_ana}
  \mathcal{P}^{\psi}_{f,T,r}s = V_f^*\psi T (r\circ V_f[s]).
\end{equation}

    \item
  An  \emph{input-output enveloped signal processing pipeline} is defined by
\begin{equation}
 \label{eq:syn_ana1}
 \mathcal{P}^{\psi; \eta}_{f,T,r}s = V_f^*\psi T \Big(\eta r\circ V_f[s]\Big).
\end{equation}
\end{enumerate}
\end{definition}

The following list of Monte Carlo approximations correspond to the pipelines of Definition \ref{SP_pipe}.

\begin{definition}
\label{def:SSP}
Let $\mathcal{P}_{f,T,r}$ be a signal processing pipeline,  $\psi$ and $\eta$ two envelopes, and $K,L\in\NN$. Let $s\in\cH$ denote a generic signal.
\begin{enumerate}
\item
The \emph{output stochastic signal processing pipeline} is defined by
\begin{equation}
  \label{eq:syn_ana_S}  
  [\mathcal{P}s]_{f,T,r}^{\psi, K} = V_f^{*\psi, K }\big(T (r\circ V_f[s])\big) 
\end{equation}

\item
For a  phase space integral operator $T$, the  \emph{input-output stochastic signal processing pipeline} is defined  by
\begin{equation}
   \label{eq:syn_ana1_S}  
   [\mathcal{P}s]_{f,T,r}^{\psi, K ;\eta,L} = V_f^{*\psi,K}\big(T^{\eta, L}(r\circ V_f[s])\big)
\end{equation}
\end{enumerate}
\end{definition}

We typically fix $f$,  $T$, and $r$, in which case we omit them from the pipeline notation and  denote  $\mathcal{P}$, $\mathcal{P}^{\psi}$, $\mathcal{P}^{\psi,K}$ etc.
Equations (\ref{eq:PSSPA2MC2AA})--(\ref{eq:PSSPA2MC2BB}) give explicit formulas for the synthesis and analysis formulations of  $[\mathcal{P}s]^{\psi, K ;\eta,L}$ and $[\mathcal{P}s]^{\psi, K}$, based on the samples in phase space.
As noted in Subsection \ref{Randomized quadrature approximations of continuous frames}, the pipeline (\ref{eq:syn_ana_S}) is useful for multipliers, shrinkage, and phase vocoder, and the pipeline (\ref{eq:syn_ana1_S}) is useful for PSI operators.

\subsection{Expected error in stochastic phase space signal processing}
\label{Expected error in stochastic phase space signal processing}

In the following, we estimate the error of the stochastic methods.

\begin{theorem}
\label{Phase_op_Monte1}
Let $f$ be a bounded continuous frame with bounds $A,B$, and $\norm{f_g}_{\cH}\leq C$.
Let $T$ be a bounded operator in $L^2(G)$, and $r:\CC\rightarrow\CC$ satisfy $\abs{r(x)}\leq E\abs{x}$ for some $E\geq 0$ and every $x\in\CC$. Let $\psi$ and $\eta$ be two envelopes, and $K,L\in\NN$. Then, for every signal $s\in\cH$, the following two properties hold.
\begin{enumerate}
\item
Output enveloped signal processing stochastic approximation:
\[
\mathbb{E}\Big(\norm{[\mathcal{P}s]^{\psi, K}-[\mathcal{P}s]^{\psi}}_{\cH}^2\Big)
\leq
\frac{\norm{\psi}_1}{K}A^{-1}B^2C^2 E^2\norm{T}_2^2 \norm{s}_{\cH}^2.
\]
\item
Input-output enveloped signal processing stochastic approximation: \newline
if $T$ is a uniformly bounded PSI operator with bound $D$, then
\[
\begin{split}
 & \mathbb{E}\Big(\norm{[\mathcal{P}s]^{\psi, K;\eta,L}-[\mathcal{P}s]^{\psi;\eta}}_{\cH}^2\Big)\\
 & \leq
4\frac{{\norm{\eta}_1}}{L}D^2 B^2 E^2 \norm{s}_{\cH}^2
+16\frac{{\norm{\psi}_1}{\norm{\eta}_1}}{KL}A^{-1}B^2 C^2 D^2E^2\norm{s}_{\cH}^2 \\
 & \quad   +16\frac{\norm{\psi}_1}{K}A^{-1}B^2 C^2 E^2\norm{T}_2^2 \norm{s}_{\cH}^2 \\
	 & = O\left(\frac{\norm{\eta}_1}{L}+\frac{\norm{\psi}_1}{K} + \frac{{\norm{\psi}_1}{\norm{\eta}_1}}{KL}\right)\norm{s}_{\cH}^2. 
\end{split}
\]

\end{enumerate}
\end{theorem}

We use the following simple observation to prove Theorem \ref{Phase_op_Monte1}.

\begin{lemma}
\label{simple_lemma}
Let $Z_1,Z_2,Z_3$ be non-negative real-valued random variables such that
 $Z_1\leq Z_2+Z_3$ 
pointwise in the sample set. Then,
\begin{equation}
    \mathbb{E}(Z_1^2) \leq 4\mathbb{E}(Z_2^2) + 4\mathbb{E}(Z_3^2).
    \label{eq:lem_21}
\end{equation}
\end{lemma}
\begin{proof}
We have  $Z_1\leq 2\max\{Z_2,Z_3\}$, where the maximum is pointwise in the sample space. Therefore, $Z_1^2\leq 4\max\{Z_2^2,Z_3^2\} \leq 4Z_2^2+4Z_3^2$, and (\ref{eq:lem_21}) follows. 
\end{proof}

\begin{proof}[Proof of Theorem \ref{Phase_op_Monte1}]

We prove 2, and note that 1 is simpler and uses similar techniques. Denote by ${\bf g}=\{g^1,\ldots,g^K\}$ the output samples underlying $V_f^{*\psi,K}$, and by ${\bf y}=\{y^1,\ldots,y^L\}$ the input samples underlying $T^{\eta,L}$.
Denote $F=r(V_f[s])\in L^2(G)$.
By the triangle inequality, and by the fact that  $\norm{V_f}=\norm{V_f^*} \leq B^{1/2}$  and $0\leq\psi(g)\leq 1$,
\begin{equation}
  \begin{split}
 & \norm{V_f^{*\psi,K}T^{\eta, L}F-V_f^*\psi T[\eta F]}_{\cH} \\
  & \leq \norm{V_f^{*\psi,K}T^{\eta, L}F-V_f^*\psi T^{\eta,L} F}_{\cH}  + B^{1/2}\norm{ T^{\eta,L} F- T[\eta F]}_{2}.  
\end{split}  
\label{eq:proof0}
\end{equation}
 When calculating the conditional expected value of $\norm{V_f^{*\psi,K}T^{\eta, L}F-V_f^*\psi T[\eta F]}_{\cH}^2 $, with respect to a fixed ${\bf g}$ (denoted here by $\mathbb{E}(\ \cdot\  | {\bf g})$), we use  Lemma \ref{simple_lemma} and Proposition \ref{Input_proposition} to get
\[\begin{split}
 & \mathbb{E}\Big(  \norm{V_f^{*\psi,K}T^{\eta, L}F-V_f^*\psi T[\eta F]}_{\cH}^2  \Big| {\bf g}\Big)
\\
 & \leq
4\frac{{\norm{\eta}_1}}{{L}}D^2 B \norm{F}_2^2
+4 \mathbb{E} \Big( \norm{V_f^{*\psi,K}T^{\eta, L}F-V_f^*\psi T^{\eta,L} F}_{\cH}^2 \Big| {\bf g}\Big).
\end{split}\]
Thus
\[\begin{split}
 & \mathbb{E}\Big(  \norm{V_f^{*\psi,K}T^{\eta, L}F-V_f^*\psi T[\eta F]}_{\cH}^2 \Big) \\
 & \leq
4\frac{{\norm{\eta}_1}}{{L}}D^2 B \norm{F}_2^2
+4 \mathbb{E} \Big( \norm{V_f^{*\psi,K}T^{\eta, L}F-V_f^*\psi T^{\eta,L} F}_{\cH}^2 \Big).
\end{split}\]
 Note that Fubini–Tonelli theorem is satisfied in the computation of \newline
 $\mathbb{E} \Big( \norm{V_f^{*\psi,K}T^{\eta, L}F-V_f^*\psi T^{\eta,L} F}_{\cH}^2 \Big)$ as a repeated integral of ${\bf y}$ and ${\bf g}$, since the integrand is positive and the measure is $\sigma$-finite.

Next, by Proposition \ref{Delta_approx3},
\[\mathbb{E} \Big( \norm{V_f^{*\psi,K}T^{\eta, L}F-V_f^*\psi T^{\eta,L} F}_{\cH}^2 \Big| {\bf y}\Big) \leq  \frac{{\norm{\psi}_1}}{K}A^{-1}BC^2\norm{T^{\eta,L}F}_{2}^2.\]
Now,
\[\norm{T^{\eta,L}F}_{2} \leq   \norm{T^{\eta,L}F- T[\eta F]}_{2} + \norm{T[\eta F]}_{2},\]
so by Lemma \ref{simple_lemma}, by Proposition \ref{Input_proposition}, and by the fact that $0\leq\eta(y)\leq 1$,
\[\begin{split}
 & \mathbb{E} \Big( \norm{V_f^{*\psi,K}T^{\eta, L}F-V_f^*\psi T^{\eta,L} F}_{\cH}^2\Big)\\
 & \leq  \frac{{\norm{\psi}_1}}{{K}}A^{-1}B C^2 \Big(  4\frac{{\norm{\eta}_1}}{{L}}D^2\norm{F}_2^2 + 4\norm{T}_2^2\norm{F}_2^2  \Big).
\end{split}\]

Altogether,
\[
\begin{split}
 & \mathbb{E}\Big(  \norm{V_f^{*\psi,K}T^{\eta, L}F-V_f^*\psi T[\eta F]}_{\cH}^2 \Big)  \\
& \leq 
 4\frac{{\norm{\eta}_1}}{L} D^2 B \norm{F}_2^2
+16\frac{{\norm{\psi}_1}}{K}A^{-1}B C^2  \frac{{\norm{\eta}_1}}{L}D^2\norm{F}_2^2\\
& \quad
+16\frac{{\norm{\psi}_1}}{K}A^{-1}B C^2 \norm{T}_2^2\norm{F}_2^2. 
\end{split}
\]
The claim now follows from $\norm{F}_2^2=\norm{r\circ V_f[s]}_2^2 \leq  E^2 B\norm{s}_{\cH}^2$.

\end{proof}

\subsection{Concentration of error in stochastic phase space signal processing}
\label{Error bounds in high probability}

Propositions \ref{Input_proposition} and \ref{Delta_approx3}, and Theorem \ref{Phase_op_Monte1} estimate the average square error of the stochastic approximations.
In this subsection, we formulate the approximation results as bounds on the error that hold in high probability. We show how to apply the classical concentration of measure estimates, Markov's inequality, and Bernstein's inequality, in our setting.

\subsubsection{A Bernstein inequality in Hilbert spaces}

 In the following version of Bernstein's inequality, we define expected values of weakly integrable random vectors $v$ over the sample set $G$ using the weak integral (Definition \ref{D:week_vec_int}) as $\mathbb{E}^{\rm w}(v)=\int_G^{\rm w}v(g)d\mu(g)$. The following version of Bernstein's inequality is a direct result of \cite[Theorem 2.6]{Berns}, and is proved in Appendix \ref{Hilbert space Berntein inequality}. 
 
 \begin{theorem}[Hilbert space Bernstein's inequality]
\label{H_Bern}
Let $\cH$ be a separable Hilbert space, and $G$ a probability space.
Let $\{v_k\}_{k=1}^K:G^K\rightarrow\cH^K$ be a finite sequence of independent random weakly integrable vectors.
Suppose that for every $k=1,\ldots,K$, $\mathbb{E}^{\rm w}(v_k)=0$ and $\norm{v_k}_{\cH}\leq B$ a.s. and assume that \textcolor{black}{$\rho_K^2> \sum_{k=1}^K \mathbb{E}\norm{v_k}_{\cH}^2$} for some constant $\rho_K\in\RR$. Then, for every $0\leq t \leq \rho_K^2/B$,
\begin{equation}
\label{eq:H_bern}
   P\left( \norm{\sum_{k=1}^K v_k}_{\cH}\geq t\right)\leq \exp\left(-\frac{t^2}{8\rho_K^2}+\frac{1}{4}\right). 
\end{equation}
\end{theorem}

We note that existing variants of Bernstein's inequality in infinite dimensional Hilbert spaces are not adequate for us. For example, the operator Bernstein's inequality of \cite{Op_Bern} is limited to trace class operators, and thus does not even include the identity. 

\subsubsection{Concentration of error results}

Markov type concentration of error results can be derived from Propositions \ref{Input_proposition} and \ref{Delta_approx3}, and Theorem \ref{Phase_op_Monte1}, by multiplying the error bound by $\d^{-1}$, and replacing the expected value with an event that has probability at least $(1-\d)$. For example, the Markov type concentration of error version of Proposition  \ref{Delta_approx3}
reads
\[ \norm{V_f^{*K}F-V_f^* [\psi F]}_{\cH}^2  \leq \frac{\norm{\psi}_1}{K} \frac{B}{A}C^2 \norm{F}_2^2\d^{-1}\]
in probability more that $(1-\d)$.

The following proposition summarizes the Markov and Bernstein types concentration of error bounds in output stochastic  signal processing.

\begin{theorem}[Output signal processing  concentration of error]
\label{Delta_approx300}
Let $f$ be a bounded continuous frame with frame bounds $A$ and $B$, and with $\norm{f_g}_{\cH}\leq C$, let $\psi$ be an envelope, and $K\in\NN$. Let $T$ be a bounded  operator in $L^2(G)$, and $r:\CC\rightarrow\CC$ satisfy $\abs{r(x)}\leq E \abs{r(x)}$ for every $x\in\CC$, where $E>0$. Let $s\in \cH$ and $0<\delta<1$. 
 Then,
	with probability more than $1-\delta$, we have
	\begin{equation}
	    \label{eq:BBV1}
	     \norm{[\mathcal{P}s]^{\psi, K}-[\mathcal{P}s]^{\psi}}_{\cH} \leq  \frac{\sqrt{\norm{\psi}_1}}{\sqrt{K}} A^{-1/2}BCE\norm{T}_2\norm{s}_{\cH} \kappa(\delta),
	\end{equation}
where $\kappa(\d)$ can be chosen as one of the following two options.
\begin{enumerate}
	\item 
	\emph{Markov type error bound:} $\kappa(\d)=\delta^{-\frac{1}{2}}$.
	\item
	\emph{Bernstein type error bound:} $\kappa(\d)=2\sqrt{2}\sqrt{\ln\Big(\frac{1}{\d}\Big) +\frac{1}{4}}$ in case $\norm{T}_{\infty}<\infty$ and $K$ satisfies
	\begin{equation}
	   K \geq  \norm{\psi}_1\Big(\frac{C}{B^{1/2}} \frac{\norm{T}_{\infty}}{\norm{T}_2} + \frac{B^{1/2}}{C\norm{\psi}_1} \Big)^2 \k(\d)^2.
	   \label{eq:Klb}
	\end{equation}
\end{enumerate}
\end{theorem}


\begin{proof}
We prove 2 and note that 1 is simpler and based on Markov's inequality.
Denote $F=T r(V_f[s])$.
Below, we use the following bounds

\begin{equation}
    \label{eq:TV_bound0}
    \norm{F}_{\infty} =\norm{T r(V_f[s])}_{\infty} \leq \norm{T}_{\infty} EC\norm{s}_{\cH},
\end{equation}
\begin{equation}
    \label{eq:TV_bound1}
  \norm{F}_2=\norm{T r(V_f[s])}_{2} \leq \norm{T}_2 E B^{1/2}\norm{s}_{\cH} .
\end{equation}
and
\begin{equation}
\label{eq:Eless}
 B^{1/2}C \norm{F}_{\infty}+ \frac{1}{\norm{\psi}_1}B\norm{F}_2\leq J,
\end{equation}
where
\[J= B^{1/2}C^2E \norm{T}_{\infty} \norm{s}_{\cH}+ \frac{1}{\norm{\psi}_1}B^{1.5}E \norm{T}_2\norm{s}_{\cH}.\]

We use Theorem \ref{H_Bern} as follows.
Define the independent random vectors
\[v_k:G^k\rightarrow L^2(G), \quad v_k(\mathbf{g})= \frac{1}{K} \big(Q_f^1(g^k)  F -  Q_f \psi F \big), \quad k=1,\ldots, K\]
where the sample set is $\{G^k\ ;\ \prod_{k=1}^K \frac{\psi(g^k)}{\norm{\psi}_1}dg^k\}$. 
By Corollary \ref{Ex_Q_E}, $\mathbb{E}^{\rm w}(v_k)=\mathbb{E}(v_k)=0$, and 
$\mathbb{E}(\norm{v_k}_2^2) \leq \frac{\norm{\psi}_1}{K^2}BC^2\norm{F}^2_2$. Therefore, by (\ref{eq:TV_bound1}),
\[\sum_{k=1}^K \mathbb{E}(\norm{v_k}_2^2) \leq \frac{\norm{\psi}_1}{K}BC^2\norm{F}^2_2 \leq \frac{\norm{\psi}_1}{K}B^2C^2E^2 \norm{T}^2_2\norm{s}^2_{\cH} .\]
Moreover, by Proposition \ref{Gram_Ker}, Example \ref{Ex:Qf_bound}, and (\ref{eq:Eless}), for every $g^k\in G$
\[
\begin{split}
  \norm{v_k}_2 & \leq \frac{1}{K}\Big(\norm{ \norm{\psi}_1K(g^k,\cdot)F(g^k) }_2 + \norm{Q_f \psi F}_2\Big) \\
  & \leq \frac{\norm{\psi}_1}{K}\Big(  B^{1/2}C \norm{F}_{\infty}+ \frac{1}{\norm{\psi}_1}B\norm{F}_2\Big) \leq \frac{\norm{\psi}_1}{K} J. 
\end{split}
\]
Hence, by Theorem \ref{H_Bern},
for every
$0 \leq t\leq B^2C^2E^2 \norm{T}^2_2\norm{s}^2_{\cH}/J$
\begin{equation}
P\Big( \norm{Q_f^K F - Q_f(\psi F)}_2 \geq t \Big) \leq \exp\left(-\frac{t^2}{8B^2C^2E^2 \norm{T}^2_2\norm{s}^2_{\cH}}\frac{K}{\norm{\psi}_1} + \frac{1}{4}\right).
\label{eq:BBV2}
\end{equation}
Now, set
\[\d=\exp\left(-\frac{t^2}{8B^2C^2E^2 \norm{T}^2_2\norm{s}^2_{\cH}}\frac{K}{\norm{\psi}_1} + \frac{1}{4}\right),\]
or equivalently
\[
t=\sqrt{8}\sqrt{-\ln(\delta)+\frac{1}{4}}BCE\norm{T}_2\norm{s}_{\cH}\frac{\sqrt{\norm{\psi}_1}}{\sqrt{K}},
\]
and demand $0 \leq t\leq B^2C^2E^2 \norm{T}^2_2\norm{s}^2_{\cH}/J$, namely,
\[\sqrt{8}\sqrt{-\ln(\delta)+\frac{1}{4}}BCE\norm{T}_2\norm{s}_{\cH}\frac{\sqrt{\norm{\psi}_1}}{\sqrt{K}} \leq B^2C^2E^2 \norm{T}^2_2\norm{s}^2_{\cH}/J.\]
%
%
%
%
%
This gives, in probability at least $(1-\delta)$,
\[\norm{Q_f^K F - Q_f\psi F}_{\cH} \leq BCE\norm{T}_2\norm{s}_{\cH}\frac{\sqrt{\norm{\psi}_1}}{\sqrt{K}}\kappa(\delta),\]
whenever $k$ satisfies (\ref{eq:Klb}).
Last, using Proposition \ref{MCsynthesis} and (\ref{eq:SynthBound}), we get
\[
\begin{split}
 \norm{[\mathcal{P}s]^{\psi, K}-[\mathcal{P}s]^{\psi}}_{\cH} & =
\norm{S_f^{-1}V_f^*Q_f^K F - S_f^{-1}V_f^*Q_f\psi F}_{\cH}\\
& \leq A^{-1/2}\norm{Q_f^K F - Q_f\psi F}_{\cH}
\end{split}
\]

%
%
%
%
\end{proof}

\section{Stochastic phase space signal processing of discrete signals}
\label{Discrete stochastic phase space signal processing}

\textcolor{black}{In previous sections we showed how to randomly discretize phase space. 
In   Theorems \ref{Phase_op_Monte1} and \ref{Delta_approx300}, when the number of samples satisfy $K,L=Z \max\{\norm{\psi}_1,\norm{\eta}_1\}$, for  $Z>0$, the approximation errors are of order $O(Z^{-1/2})$.
In this section, we additionally discretize the signal space $\cH$ to a finite dimensional subspace $V_M\subset \cH$ of dimension/resolution $M\in\NN$.} 
The main goal is to relate the choices of $\norm{\psi}_1$ and $\norm{\eta}_1$ to the \textcolor{black}{resolution} $M$. We introduce a class of frames, called \emph{linear volume discretizable (LVD) frames}, for which there are envelopes $\psi_M$ and $\eta_M$ with $\norm{\psi_M}_1,\norm{\eta_M}_1=O(M)$ that contain most of the energy of $V_f[s_M]$ for every $s_M\in V_M$. Thus, a stochastic signal processing  method for LVD frames requires $K,L=ZM$ samples, with $Z>0$, for the approximation error to be  $O(\frac{1}{\sqrt{Z}})$.

\subsection{Discrete signals and linear volume discretization of frames}

We treat discrete signals as embedded in the Hilbert space of signals $\cH$. A discrete signal is an element of a finite dimensional subspace of $\cH$. On the one hand, we can analyze discrete signals directly in $\cH$. On the other hand, discrete signals are determined by a finite number of scalars, so they are well adapted to numerical analysis. In our analysis, we sometimes restrict ourselves to a class of signals $\mathcal{R}\subset \cH$ which need not be a linear space. We typically consider $\mathcal{R}$ defined by imposing a restriction on signals in $\cH$ which is natural for real life signals of some type. 
\textcolor{black}{
\begin{definition}
Let $\cH$ be a Hilbert space that we call the \emph{signal space}.
A \emph{class of signals} $\mathcal{R}\subset\cH$ is a (possibly non-linear) subset of $\cH$. A \emph{sequence of discretizations} of $\mathcal{R}$ is a sequence of (generally non-linear) subspaces $\{V_M\subset \mathcal{H}\}_{M=1}^{\infty}$ that satisfies the following condition: for every $s\in \mathcal{R}$ there is a sequence $\{s_M\in V_M\}_{M=1}^{\infty}$ such that
\[\lim_{M\rightarrow\infty}\norm{s_M-s}_{\cH}=0.\]
 The \emph{resolution} ${\rm dim}(V_M)$ of $V_M$ is defined to be the dimension of ${\rm span}V_M$.
\end{definition}
}




The idea in discretizing \textcolor{black}{a continuous frame} is to find an envelope $\psi_M$ for each discrete space $V_M$ such that for any $s_M\in V_M$, the approximation error of $V_f[s_M]$ by $\psi_M V_f[s_M]$ is controlled.
 The envelopes $\psi_M$ are interpreted as  covering domains $G_M\subset G$ in which most of the energy of functions from $V_f [V_M]$ resides. 



\begin{definition}
\label{D:linear area discretizable}
Let $f:G\rightarrow\cH$ be a continuous frame.
Let $\mathcal{R}\subset \cH$ be a class of signals, and $\{V_M\}_{m=1}^{\infty}$ a discretization of $\mathcal{R}$. 
\begin{enumerate}
	\item 
	The continuous frame $f$ is called \emph{linear \textcolor{black}{volume} discretizable (LVD)} with respect to the class $\mathcal{R}$ and the discretization $\{V_M\}_{M=1}^{\infty}$, if for every error tolerance $\e>0$ there is a constant $C_{\epsilon}>0$ and $M_0\in\NN$, such that for any $M\geq M_0$ there is an envelope $\psi_M$ with 
\begin{equation}
\norm{\psi_M}_1 \leq C_{\epsilon}{\rm dim}(V_M)
\label{eq:lin_area1}
\end{equation}
such that for any $s_M\in  V_M$,
\begin{equation}
\frac{\norm{V_f[s_M] - \psi_M V_f[s_M]}_2}{\norm{V_f[s_M]}_2} < \e.
\label{eq:lin_area2}
\end{equation}
\item
For a linear \textcolor{black}{volume} discretizable continuous frame $f$ with respect to $\mathcal{R}$ and $\{V_M\}_{M=1}^{\infty}$, and a fixed tolerance $\e>0$ with a corresponding fixed $C_{\epsilon}$ and envelope sequence $\{\psi_M\}_{M=1}^{\infty}$ satisfying (\ref{eq:lin_area1}) and (\ref{eq:lin_area2}), we call $f$ together with $\mathcal{R}$, $\{V_M\}_{m=1}^{\infty}$,  and $\{\psi_M\}_{M=1}^{\infty}$, an \emph{$\e$-linear volume discretization 
($\e$-LVD)} of $f$.
\end{enumerate}
\end{definition}



\subsection{\textcolor{black}{Error in discrete stochastic phase space signal processing}}
\label{Error in discrete stochastic phase space signal processing}

Next, we study the error in discrete stochastic phase space signal processing of  LVD frames. Since the energy of $V_f[s_M]$ may be shifted after applying an operator $T$ on $V_f[s_M]$, we first introduce the following definition.

\begin{definition}
\label{def:mapE}
Let $G$ be a phase space,  $T$ a bounded linear operator in $L^2(G)$,  $\psi$ and $\eta$ two envelopes, and $\e>0$. We say that $T$ \emph{maps  the energy of $\eta$ to $\psi$ up to $\epsilon$}, if
\begin{equation}
\norm{T\eta - \psi T \eta}_{2\rightarrow 2} \leq \e.
\label{eq:Tmap0b}
\end{equation}
\end{definition}

The next theorem summarizes the expected approximation error in stochastic signal processing with $\e$-LVD frames.

\begin{theorem}
\label{Main_Theorem12}
Let $f$ be a bounded continuous frame with bound $\norm{f_g}_{\cH}\leq C$. Let $r:\CC\rightarrow\CC$ satisfy $\abs{r(x)}\leq E\abs{x} $, where $E>0$.
Suppose that $f$ together with the signal class $\mathcal{R}$, the discretization $\{V_M\}_{m=1}^{\infty}$,  and the envelopes $\{\eta_M\}_{M=1}^{\infty}$, is an $\e$-LVD of $f$, with constant $C_{\epsilon}$. Let $\{\psi_M\}_{M=1}^{\infty}$ be a sequence of envelopes satisfying
\begin{equation}
\norm{\psi_M}_1 \leq C_{\epsilon}{\rm dim}(V_M).
\label{eq:lin_area12b}
\end{equation}
Let $T$ be a bounded operator  on $L^2(G)$ that maps the energy of $\eta_M$ to  $\psi_M$ up to $\epsilon$.
Then, the following two bounds are satisfied for every $s_M\in V_M$.

\begin{enumerate}
	\item $ $
	
	\vspace{-8mm}
	\begin{equation}
	\begin{split}
    & \frac{\mathbb{E}\Big(\norm{ [\mathcal{P}s_M]_{f,T,r}^{\psi_M, K}  - [\mathcal{P}s_M]_{f,T,r} }_{\cH}^2\Big)}{{\norm{s_M}_{\cH}^2} } \\
    & \leq
    4\frac{C_{\epsilon}{\rm dim}(V_M)}{K}A^{-1}B^2C^2 E^2\norm{T}_2^2
    +4 B^2  E^2 (1+2\norm{T}_2)^2 \e^2
\end{split}
	\label{eq:Discrete_SP1}
	\end{equation}
	\item If $T$ is a uniformly square integrable PSI operator with bound $D$, then
\begin{equation}
	\begin{split}
    & \frac{\mathbb{E}\Big(\norm{ [\mathcal{P}s_M]_{f,T,r}^{\psi_M, K;\eta_M,L}  - [\mathcal{P}s_M]_{f,T,r} }_{\cH}^2\Big)}{{\norm{s_M}_{\cH}^2} }  \\
     &   \leq
16\frac{C_{\epsilon}{\rm dim}(V_M)}{L}D^2 B^2 E^2 
+64\frac{C_{\epsilon}^{2}{\rm dim}(V_M)^2}{KL}A^{-1}B^2 C^2 D^2E^2 \\
 & \quad   +64\frac{C_{\epsilon}{\rm dim}(V_M)}{K}A^{-1}B^2 C^2 E^2\norm{T}_2^2  + 4B^2 E^2 (1+\norm{T}_2^2)\e^2.
\end{split}  
	\label{eq:Discrete_SP10}
	\end{equation}
	\end{enumerate}
\end{theorem}

\begin{proof}

We first prove (\ref{eq:Discrete_SP10}).
By Lemma \ref{simple_lemma},
\begin{equation}
  \begin{split}
    & \mathbb{E}\Big(\norm{ [\mathcal{P}s_M]^{\psi_M, K;\eta_M,L} - [\mathcal{P}s_M]_{f,T,r} }_{\cH}^2\Big) \\
     & \leq 4 \mathbb{E}\Big(\norm{[\mathcal{P}s_M]^{\psi_M, K;\eta_M,L}-[\mathcal{P}s_M]^{\psi_M;\eta_M}}_{\cH}^2\Big)  
     + 4\norm{[\mathcal{P}s_M]^{\psi_M;\eta_M} - [\mathcal{P}s_M]}_{\cH}^2.
\end{split}  
\label{eq:T29_1}
\end{equation}
Next, we bound the second term of (\ref{eq:T29_1}). 
\begin{equation}
 \begin{split}
  \norm{[\mathcal{P}s_M]^{\psi_M;\eta_M} - [\mathcal{P}s_M]}_{\cH} 
 & = \norm{V_f^*\psi_M T \Big(\eta_M r\circ V_f[s_M]\Big) - V_f^* T \Big( r\circ V_f[s_M]\Big)}_{\cH} \\
  & \leq B^{1/2}\norm{\psi_M T \Big(\eta_M r\circ V_f[s_M]\Big) -  T \Big( r\circ V_f[s_M]\Big)}_{\cH} \\
   & \leq B^{1/2}\norm{ T \eta_M r\circ V_f[s_M] -\psi_M T \eta_M r\circ V_f[s_M]}_2\\
   & \quad + B^{1/2}\norm{ Tr\circ V_f[s_M] - T \eta_M r\circ V_f[s_M]}_2  \\
 &  \leq   B^{1/2}\norm{ T \eta_M  -\psi_M T \eta_M}_2 \norm{ r\circ V_f[s_M]}_2 \\
 & \quad + B^{1/2}\norm{T}_2\norm{ r\circ V_f[s_M] -  \eta_M r\circ V_f[s_M]}_2\\
 & \leq  B\e E\norm{s_M}_{\cH} + B^{1/2}\norm{T}_2\norm{ (1-\eta_M)r\circ V_f[s_M]}_2 .
\end{split}   
\label{eq:Discrete_SPSO2}
\end{equation}
For the second term of the last line of (\ref{eq:Discrete_SPSO2}), by the  LVD property,
\begin{equation}
    \label{eq:T29_2}
    \norm{ (1-\eta_M)r\circ V_f[s_M]}_2  
\leq E \norm{\eta_M V_f[s_M]-V_f[s_M]}_2 \leq \e E B^{1/2}\norm{s_M}_{\cH}.
\end{equation}
To conclude, (\ref{eq:T29_1}) together with (\ref{eq:Discrete_SPSO2}), (\ref{eq:T29_2}), and Theorem \ref{Phase_op_Monte1}, give (\ref{eq:Discrete_SP10}).

Next, we prove  (\ref{eq:Discrete_SP1}).
As before,
\begin{equation}
  \begin{split}
    & \mathbb{E}\Big(\norm{ [\mathcal{P}s_M]_{f,T,r}^{\psi_M, K}  - [\mathcal{P}s_M]_{f,T,r} }_{\cH}^2\Big) \\
     & \leq 4 \mathbb{E}\Big(\norm{[\mathcal{P}s_M]^{\psi_M, K}-[\mathcal{P}s_M]^{\psi_M}}_{\cH}^2\Big)  
     + 4\norm{[\mathcal{P}s_M]^{\psi_M} - [\mathcal{P}s_M]}_{\cH}^2.
\end{split}  
\label{eq:T29_5}
\end{equation}
We bound the second term of (\ref{eq:T29_5}) using (\ref{eq:Tmap0b}) and  (\ref{eq:T29_2}) by
\begin{equation}
\begin{split}
  &  \norm{[\mathcal{P}s_M]^{\psi_M} - [\mathcal{P}s_M]}_{\cH}
= \norm{V_f^*\psi_M T r\circ V_f[s_M] -V_f^* T r\circ V_f[s_M]
}_{\cH} \\
& \leq B^{1/2} \norm{ T r\circ V_f[s_M] -  T \eta_M r\circ V_f[s_M]}_{\cH}\\
&\quad +B^{1/2} \norm{T \eta_M r\circ V_f[s_M]-\psi_M T \eta_M  r\circ V_f[s_M] }_{\cH}\\
&\quad +B^{1/2} \norm{\psi_M T \eta_M  r\circ V_f[s_M]-\psi_M T  r\circ V_f[s_M] }_{\cH}\\
& \leq B^{1/2} \norm{ T}_2\norm{ r\circ V_f[s_M] -   \eta_M r\circ V_f[s_M]}_{\cH}\\
&\quad  +B^{1/2} \norm{T \eta_M -\psi_M T \eta_M  }_{\cH}\norm{r\circ V_f[s_M]}_{\cH} \\
&\quad  +B^{1/2} \norm{T}_2\norm{\eta_M  r\circ V_f[s_M]-  r\circ V_f[s_M] }_{\cH}\\
& \leq B^{1/2} \norm{T}_2 \e E B^{1/2}\norm{s_M}_{\cH}
+B^{1/2} \e E B^{1/2}\norm{s_M}_{\cH}
+B^{1/2} \norm{T}_2 \e E B^{1/2}\norm{s_M}_{\cH} \\
& =  BE(2\norm{T}_2+1)\norm{s_M}_{\cH}.
\end{split}
    \label{eq:T29_9}
\end{equation}
This, together with (\ref{eq:T29_5}) and Theorem \ref{Phase_op_Monte1}, leads to  (\ref{eq:Discrete_SP1}).
%
%
%
%
%
%
%
%
%
%
%
%

\end{proof}

Next, we formulate concentration of error results for LVD frames. A Markov type concentration of error result can be derived directly from Theorem \ref{Main_Theorem12}. For a Bernstein type error bound, we offer the following theorem only for the output stochastic signal processing pipeline (Definition \ref{def:SSP}.1).

\begin{theorem}
\label{th:ful_lMC_SP}
Consider the setting of Theorem \ref{Main_Theorem12}.1, and suppose that $\norm{T}_{\infty}<\infty$. Let $\delta>0$, 
$\kappa(\d)=2\sqrt{2}\sqrt{\ln\Big(\frac{1}{\d}\Big) +\frac{1}{4}}$, and $K$ satisfy (\ref{eq:Klb}).
Then, in probability more than $(1-\delta)$,
\begin{equation}
  \begin{split}
    & \frac{\norm{ [\mathcal{P}s_M]_{f,T,r}^{\psi_M, K}  - [\mathcal{P}s_M]_{f,T,r} }_{\cH}}{\norm{s_M}_{\cH}} \\
     & \leq  \frac{\sqrt{C_{\epsilon}{\rm dim}(V_M)}}{\sqrt{K}} A^{-1/2}BCE\norm{T}_2 \kappa(\delta)  + BE(2\norm{T}_2+1) \epsilon.
\end{split}  
\label{eq:T30_0}
\end{equation}
\end{theorem}

\begin{proof}
The proof follows from (\ref{eq:T29_9}) and Theorem \ref{Delta_approx300}.2, similarly to the proof of Theorem \ref{Main_Theorem12}.1. 

\end{proof}

\subsection{Discrete stochastic time-frequency signal processing}
\label{Discretization of phase space in time frequency analysis}

In this subsection we present a discretization under which the STFT 
 is LVD. In the companion paper \cite{Ours_app} we present a discretization under which the CWT is linear volume discretizable.  We analyze time signals $s:\RR\rightarrow\CC$ by decomposing them to compact time interval sections. Without loss of generality, we suppose that each signal segment is supported in $[-1/2,1/2]$.  
%
%
%
%
Focusing on one segment, we take the signal class $\mathcal{R}$  as $L^2[-1/2,1/2]$.
%
%
 Let $V_M$ be the space of trigonometric polynomials of order $M$ (namely, finite Fourier series expansions).
In the frequency domain, signals $q\in V_M$ are represented by
\[\hq(z)=\sum_{n=-M}^M c_n {\rm sinc}(z-n)\]
where $c_n$ are the Fourier coefficients of $q$, and ${\rm sinc}$ is the Fourier transform of the indicator function of  $[-1/2,1/2]$.
Consider a window function $f$ supported at the time interval $[-S,S]$ that satisfies the following. There exist constants $C',Y>0$, and $\k>1/2$, such that for every $z>Y$ or $z<-Y$
\begin{equation}
\hf(z) \leq C'\abs{z}^{-\k}.
\label{eq:fSTFT_decay}
\end{equation}

Let $W>0$.
For each $M\in\NN$, we consider the following phase space domain $G_M\subset G$, where $G$ is the STFT time-frequency plane,
\begin{equation}
G_M = \big\{(x,\w)\ \big|\ -WM < \w <WM ,\ \abs{x}< 1/2+ S\big\}.
\label{eq:GM_STFT}
\end{equation}
The area of $G_M$ in the time-frequency plane is
\begin{equation}
\mu(G_M) = 2WM(1+2S).
\label{eq:VolGM_CWT2}
\end{equation}
Denote 
\begin{equation}
\psi_M(g) = \left\{\begin{array}{ccc}
	1 & , & g\in G_M \\
	0 & ,  & g\notin G_M.
\end{array}\right.
\label{eq:psi_STFT}
\end{equation}

\begin{theorem}
\label{lin_vol_STFT}
Under the above setting,
the STFT is LVD with respect to the class $L^2[-1/2,1/2]$ and the discretization $\{V_M\}_{M\in\NN}$, with the envelopes $\psi_M$ defined by (\ref{eq:psi_STFT}) for large enough $W$ that depends only on $\e$ of Definition \ref{D:linear area discretizable}. 
\end{theorem}

\begin{proof}

Let $W>1$.
A direct calculation of the STFT shows
\begin{equation}
\int_{\RR}\abs{V_f[q](\w,x)}^2 dx = \int_{\RR} \abs{\hq(z)}^2\abs{\hf(z-\w)}^2  dz.
\label{eq:Vfs_slice_STFT}
\end{equation}
We consider $\w>0$ and $z>0$, and note that the other cases are similar.
For each value of $\w>MW$, we decompose the integral (\ref{eq:Vfs_slice_STFT}) along $z$ into the two integrals in $z\in(0,(M+\w)/2)$ and $z\in((M+\w)/2,\infty)$.
For $z\in(0,(M+\w)/2)$, 
since $\w\geq MW$ and $z\leq(M+\w)/2$, 
\[z-\w \leq M-\w/2 \leq -M(W/2-1)<0,\]
so $\abs{z-\w}^{-2\k}$ obtains its maximum at $z=(M+\w)/2$.
Thus, by (\ref{eq:fSTFT_decay}),
\[\int_{0}^{(M+\w)/2} \abs{\hq(z)}^2\abs{\hf(z-\w)}^2  dz \leq \norm{q}_2^2  \max_{0 \leq z \leq (M+\w)/2}C^{\prime 2}\abs{z-\w}^{-2\k}\]
\begin{equation}
=\norm{q}_2^2 C^{\prime 2}\abs{(M+\w)/2-\w}^{-2\k}= \norm{q}_2^2 C^{\prime 2}\abs{(M-\w)/2}^{-2\k}
\label{eq:STFTvolume20}
\end{equation}
Integrating the bound (\ref{eq:STFTvolume20}) for $\w\in(WM,\infty)$ gives
\begin{equation}
\begin{split}
\int_{WM}^{\infty}\int_{0}^{(M+\w)/2} \abs{\hq(z)}^2\abs{\hf(z-\w)}^2  dz d\w &= (W-1)^{-2k+1}M^{-2k+1}\norm{q}_2^2 O(1) \\
 & = o_W(1)o_M(1) \norm{q}_2^2.
\end{split}
\label{eq:STFTvolume30}
\end{equation}
For $z\in((M+\w)/2,\infty)$, $\hq$ decays like $M^{1/2}(z-M)^{-1}$. 
Indeed, since $z>M$ 
\begin{equation}
\begin{split}
\sum_{n=-M}^M c_n {\rm sinc}(z-n)  & \leq \norm{\{c_n\}}_2 \sqrt{\sum_{n=-M}^M \frac{1}{(z-n)^2} }
\leq \norm{q}_2 \sqrt{\sum_{n=-M}^M \frac{1}{(z-M)^2} } \\
& 
\leq   2\norm{q}_2\sqrt{M} (z-M)^{-1}.
\end{split}
\label{eq:STFTvolume1}
\end{equation}
Now, by (\ref{eq:Vfs_slice_STFT}) and (\ref{eq:STFTvolume1}),
\begin{equation}
\begin{split}
 \int_{(M+\w)/2}^{\infty} \abs{\hq(z)}^2\abs{\hf(z-\w)}^2  dz & \leq 2\norm{f}_2^2\norm{q}_2^2  \max_{(M+\w)/2 \leq z < \infty} M(z-M)^{-2}\\
& =\norm{f}_2^2\norm{q}_2^2  M \big((\w-M)/2\big)^{-2}.
\end{split}
\label{eq:STFTvolume2}
\end{equation}
Integrating the bound (\ref{eq:STFTvolume2}) for $\w\in(WM,\infty)$ gives
\begin{equation}
\int_{WM}^{\infty}\int_{(M+\w)/2}^{\infty} \abs{\hq(z)}^2\abs{\hf(z-\w)}^2  dz d\w = (W-1)^{-1}\norm{q}_2^2 O(1) .
\label{eq:STFTvolume3}
\end{equation}
Last, the bounds (\ref{eq:STFTvolume30}) and (\ref{eq:STFTvolume3}) are combined to give
$\norm{(I-\psi_M)V_f[q]}_2= o_W(1)\norm{q}_2$, 
so by the frame inequality
\[\frac{\norm{(I-\psi_M)V_f[q]}_2}{\norm{V_f[q]}_2}= o_W(1).\]
This means that given $\e>0$, we may choose $W$ large enough to guarantee $\frac{\norm{(I-\psi_M)V_f[q]}_2}{\norm{V_f[q]}_2} < \e$, 
 and also guarantee that for every $M\in\NN$, 
$\norm{\psi_M}_1\leq C_{\epsilon} M$, 
with $C_{\epsilon}= 2W(1+2S)$ by (\ref{eq:VolGM_CWT2}).
%

\end{proof}



\section{\textcolor{black}{Applications of stochastic signal processing of continuous signals}}

In this section, we introduce two  applications of the theory developed in this paper: integration of continuous frames and stochastic phase space diffeomorphism.

\subsection{Integration of linear volume discretizable frames}
\label{Integration of coherent state systems}

Here, we show how to integrate a set of LVD continuous frames into one continuous LVD frame, while retaining all stochastic approximation bounds of a single LVD frame. We first show how to integrate frames.


\begin{proposition}
\label{prop:int1}
Let $G$ and $U$ be two topological spaces with $\sigma$-finite Borel measures $\mu_G$ and $\mu_U$ respectively, with $\mu_U(U)=1$.
Let $A,B,C>0$ and $\cH$ be a Hilbert space.
For each $u\in U$, let $f_{\cdot,u}:g\mapsto f_{g,u}$ be a bounded continuous frame over the phase space $G$ and the signal space $\cH$, with frame constants $A,B$ and bound $\norm{f_{g,u}}_{\cH}\leq C$.    Suppose  that the mapping $f:(g,u)\mapsto f_{g,u}$ is continuous.
Then $f$ is a bounded continuous frame over the phase space $G\times U$, with frame constants $A,B$ and bound $\norm{f_{g,u}}_{\cH}\leq C$.
\end{proposition}

\begin{proof}
 Consider the mapping $V_f$ that maps $s\in\cH$ to the function 
\[V_f[s]:(g,u)\mapsto \ip{s}{f_{g,u}}.\]
By continuity of $(g,u)\mapsto f_{g,u}$, $V_f[s]$ is continuous for every $s\in \cH$. Indeed
\[\abs{V_f[s](g,u) - V_f[s](g',u')} = \abs{\ip{s}{f_{g,u}} - \ip{s}{f_{g',u'}}} \leq \norm{s}_{\cH}\norm{f_{g,u} - f_{g',u'}}_{\cH}.\]
Thus, for every $s\in \cH$, $V_f[s]:G\times U \rightarrow \CC$ is a measurable function. 
For each $u\in U$, denote by $V_{f_{\cdot,u}}$ the analysis operator corresponding to the continuous frame $f_{\cdot,u}$.
By Fubini-Tonelli theorem, for every signal $s\in\cH$ 
\[\norm{V_f[s]}_2^2=\iint_{G\times U} \abs{\ip{s}{f_{g,u}}}^2 d(g,u)=\int_U\int_G \abs{\ip{s}{f_{g,u}}}^2 dgdu = \int_U \norm{V_{f_{\cdot,u}}[s]}_2^2 du.\]
Therefore,
\[ A\norm{s}_{\cH}^2 =\int_U A \norm{s}_{\cH}^2 du  \leq \norm{V_f[s]}_2^2 \leq \int_U B \norm{s}_{\cH}^2 du = B \norm{s}_{\cH}^2,\]
and $\{f_{g,u}\}_{(g,u)\in G\times U}$ is a continuous frame with frame bounds $A,B$.

\end{proof}

Next, we show that the LVD property is retained under integration of frames.

\begin{proposition}
\label{th:int_LVD}
Consider the setting of Proposition \ref{prop:int1}.
Let $\mathcal{R}\subset\cH$ be a signal class, and $V_M$ a discretization of $\mathcal{R}$. Suppose that for every $u\in U$, $f_{\cdot,u}$ is an LVD frame. Let $\e>0$ and $\{\psi_M\in L^1(G)\}_{M=1}^{\infty}$ a sequence of envelopes.  Suppose that for every $u\in U$, $f_{\cdot,u}$ is $\e$-LVD with respect to the envelopes $\{\psi_M\in L^1(G)\}$ and the constant $C_{\e}$. For each $M$, denote by $\psi_M\in L^1(G\times U)$ the envelope $(g,u)\mapsto \psi_M(g)$. 
Then,  $f$ is an $\e$-LVD frame with respect to the envelopes $\{\psi_M\in L^1(G\times U)\}$ and the bound $C_{\e}$.
\end{proposition}

\begin{proof}
By Fubini-Tonelli theorem
\begin{equation}
\norm{V_{f}[s_M]-\psi_M V_{f}[s_M]}_2^2  = \int_U \norm{V_{f_{\cdot,u}}[s_M]-\psi_M V_{f_{\cdot,u}}[s_M]}_2^2 du  \leq \e^2 \norm{V_{f}[s]}_2^2. 
\label{eq:int_frame1}
\end{equation}

\end{proof}

Last, we show how to integrate operators in phase space, and show that mapping the energy between envelopes up to $\e$ (Definition \ref{def:mapE}) is preserved under integration.

\begin{proposition}
\label{prop:T_int}
Consider the setting of Proposition \ref{prop:int1}.
Let $\mathcal{R}\subset\cH$ be a signal class, and $V_M$ a discretization of $\mathcal{R}$. Suppose that for every $u\in U$, $f_{\cdot,u}$ is an  LVD frame. Let $\e>0$ and $\{\eta_M\in L^1(G)\}_{M=1}^{\infty}$ and $\{\psi_M\in L^1(G)\}$ two sequences of envelopes.  Suppose that for every $u\in U$, $f_{\cdot,u}$ is $\e$-LVD with respect to the envelopes $\{\eta_M\in L^1(G)\}$ and the constant $C_{\e}$.
For each $u\in U$, let $T_u$ be a bounded operator in $L^2(G)$ with $\norm{T_u}_{L^2(G)}\leq C_T$.  Suppose that for every $M\in\NN$ and a.e. $u\in U$,  $T_u$ maps the energy of $\eta_M$ to $\psi_M$ up to $\epsilon$.
Let  $T$ be the operator in $L^2(G\times U)$ defined for $F\in L^2(G\times U)$ by
\[TF(g,u) = T_u F(\cdot,u)(g).\]
Then $T$ is bounded with $\norm{T}_{L^2(G\times U)}\leq C_T$, and maps  the energy of $\eta_M\in L^1(G\times U)$ to $\psi_M\in L^1(G\times U)$ up to $\epsilon$.
\end{proposition}


Under the assumptions of Proposition \ref{prop:T_int}, $f$ and $T$ satisfy the conditions of Theorems \ref{Main_Theorem12} and \ref{th:ful_lMC_SP}. This means that the number of random samples in the stochastic method in $f$, required for a given accuracy, is comparable to the number of samples required for each $\{f_{\cdot,u}\}_{u\in U}$. Namely,  the addition of the new feature direction $U$ to the phase space $G$ does not entail an increase in computational complexity, and the approximation of the continuous method by the discrete Monte Carlo method is of order $O(\frac{\sqrt{{\rm dim}(V_M)}}{\sqrt{K}})$.

\textcolor{black}{
The above procedure of integrating continuous frames can be carried out when the definition of a certain continuous frame depends on some free parameters $u$.  
For example, in the STFT and the CWT, the window function and mother wavelet are free parameters. Instead of fixing the window function, we may consider a parametric space of window functions, parameterized by $u$, sharing the same linear volume discretization, and add the parameter $u$ as additional dimensions to phase space. For example, in the CWT we may choose as $u$ the spread of the mother wavelet.  
Integration of frames is the basis on which we construct the LTFT in Definition \ref{The localizing time-frequency continuous frame} below.
}

 





\subsection{Stochastic diffeomorphism operator and highly redundant phase vocoder}
\label{Stochastic diffeomorphism operator}

In this subsection, we study the signal processing pipeline when $T$ is a diffeomorphism operator, and propose a potential application in audio signal processing.

\subsubsection{Stochastic signal processing with diffeomorphism operators}

Let $f:G\rightarrow\cH$ be a bounded continuous frame, with bound $\norm{f_g}_{\cH}\leq C$, and suppose that the phase space $G$ is a Riemannian manifold. Let $\mathcal{R}\subset \cH$ be a class of signal, $\{V_M\}_M$ a discretization of $\mathcal{R}$, and $\{\eta_M\}_M$ a sequence of envelopes. Let $\e>0$, and suppose that $\big\{f,\mathcal{R},\{V_M\}_M,\{\eta_M\}_M\big\}$ is an $\e$-LVD of $f$.

Let $d:G\rightarrow G$ be a diffeomorphism (invertible smooth mapping with smooth inverse), with Jacobian $J_d\in L^{\infty}(G)$. 
Consider the diffeomorphism operator $T$, defined for any $F\in L^2(G)$ by
\begin{equation}
[TF](g) = F\big(d^{-1}(g)\big).
\label{Stoc_diff_T}
\end{equation}
Note that $\norm{T}_2= \norm{J_d}_{\infty}$.
Let $r:\CC\rightarrow\CC$ satisfy $\abs{r(x)}\leq E \abs{x}$. The signal processing pipeline based on the diffeomorphism $T$ is defined to be $\mathcal{P}_{f,T,r}S_f^{-1}s$ for the synthesis pipeline, and $S_f^{-1}\mathcal{P}_{f,T,r}s$ for the analysis pipeline.
The stochastic approximations of these pipelines are given on $s_M\in V_M$, up to the application of $S_f^{-1}$ from the right or from the left, by
	\begin{equation}
	[\mathcal{P}s_M]^{\eta_M, K}=   \frac{\norm{\eta_M}_1}{K} \sum_{k=1}^K r\big(V_f [s](g^k)\big)f_{d(g^k)}.
	\label{eq:StoDif1}
	\end{equation}
In (\ref{eq:StoDif1}), the points $\{g^k\}_{k=1}^K$ are sampled from the envelope $\eta_M$. This means that the points $\{d(g^k)\}_{k=1}^K$ are sampled from the envelope 
  $\psi_M(g)=\eta_M\big(d(g)\big)J_d(g)$ with \newline
  $\norm{\psi_M}_1=\norm{\eta_M\big(d(\cdot)\big)J_d(\cdot)}_1 = \norm{\eta_M}_1$. 
We can use either Theorem \ref{Main_Theorem12} or Theorem \ref{th:ful_lMC_SP} to bound the stochastic approximation error, and in either case we obtain an error of order $O(\frac{\sqrt{{\rm dim}(V_M)}}{\sqrt{K}})$.


\subsubsection{Integer time dilation phase vocoder}
\label{Phase vocoder example}



A time stretching phase vocoder is an audio effect that slows down an audio signal without dilating its frequency content. In the classical definition, $G$ is the time frequency plane, and $V_f$ is the STFT. Phase vocoder can be formulated as phase space signal processing in case the signal is dilated by an integer \cite[Section 7.4.3]{vocoder_book}. For an integer $\Delta$,
we consider the diffeomorphism operator $T$ with 
$d(g_1,g_2)=(\Delta g_1,g_2)$, 
and consider the nonlinearity $r$, defined by
\begin{equation}
    \label{eq:PhaseC}
    r(e^{i\theta}a)=e^{i\Delta\theta}a,
\end{equation} for $a\in\RR_+$ and $\theta\in\RR$. 
 The phase vocoder is defined to be
 $s\mapsto V_f^* T r\circ V_f[s]$. 
Note that since the STFT is a Parseval frame, there is no difference between analysis and synthesis signal processing.

Next, we replace the STFT frame in the phase vocoder with a highly redundant time-frequency representation, based on a 3D phase space.

\subsubsection{The localizing time-frequency transform}
\label{The localizing time-frequency transform}
Here, we construct an example redundant time frequency transform based on a combination of CWT atoms and STFT atoms.
The CWT is better than the STFT at isolating transient high frequency events, since middle to high frequency wavelet atoms have shorter time supports than LTFT atoms. On the other hand, low frequency events are smeared by the CWT, since low frequency wavelet atoms have large supports. We thus use STFT atoms to represent low frequencies, and CWT atoms to represent middle frequencies. High frequencies are represented again by STFT atoms with narrow time supports. This is done to potentially avoid false positive detection of very short transient events by very short wavelet atoms. 

We then add to this 2D time-frequency system a third axis that controls the number of oscillations in the CWT atoms. We motivate this as follows. Time-frequency atoms are subject to the uncertainty principle. The more accurately a time-frequency atom measures frequency, the less accurately it measures time. Different signal features call for a different balance between the time and the frequency measurement accuracy. In polyphonic audio signals we expect a range of such appropriate balances, which means that no choice of window is appropriate for all features. Hence, the addition of the \emph{number of oscillations} axis may be useful for representing a variety of features in polyphonic audio signals.

Consider a non-negative real valued window $h(t)$ supported in $[-1/2,1/2]$. For example, the \emph{Hann window} is defined to be $h(t) = \big(1+cos(2\pi t)\big)/2$, and zero for $t\notin[-1/2,1/2]$.
Consider a parameter $\tau$ that controls the \emph{number of oscillations} in the CWT atoms. We denote by $0<\tau_1<\tau_2$ the minimal and maximal number of oscillations of the wavelet atoms. The LTFT phase space is defined to be $G=\RR^2\times [\tau_1,\tau_2]$, where the measure $\mu_3$ on $[\tau_1,\tau_2]$ is any weighted Lebesgue measure with $\mu_3([\tau_1,\tau_2])=1$.  There are two transition frequencies in the LTFT, where the atoms change from STFT to CWT atoms and back. In general, we allow these transition frequencies $0<a_{\tau}<b_{\tau}<\infty$ to depend on $\tau$.

\begin{definition}[The localizing time-frequency continuous frame]
\label{The localizing time-frequency continuous frame}
Consider the above setting.
 The LTFT atoms are defined for $(x,\w,\tau)\in \RR^2\times [\tau_1,\tau_2]$, where $x$ represents time, $\w$ frequencies, and $\tau$ the number of wavelet oscillations, by
\begin{equation}
f_{x,\w,\tau}(t) = \left\{
\begin{array}{ccc}
	\sqrt{\frac{a_{\tau}}{\tau}}h\big(\frac{a_{\tau}}{\tau}(t-x)\big)e^{2\pi i \w (t-x)} & {\rm if} & \abs{\w}<a_{\tau} \\
	\sqrt{\frac{\w}{\tau}}h\big(\frac{\w}{\tau}(t-x)\big)e^{2\pi i \w (t-x)} & {\rm if} & a_{\tau}\leq\abs{\w}\leq b_{\tau} \\
	\sqrt{\frac{b}{\tau}}h\big(\frac{b_{\tau}}{\tau}(t-x)\big)e^{2\pi i \w (t-x)}  & {\rm if} &  b_{\tau}<\abs{\w}
\end{array}
\right.
\label{eq:LTFT_atom}
\end{equation}
\end{definition}

In the companion paper \cite{Ours_app} we prove that the LTFT is an LVD  continuous frame. This is natural in view of Subsection \ref{Integration of coherent state systems},  since, up to the low and high frequency truncation, the LTFT is based on integrating LVD wavelet transforms. 

\subsubsection{LTFT-based phase vocoder}

In this subsection we offer a toy example application of the LTFT, namely, integer time stretching phase vocoder based on the implementation of Subsection \ref{Phase vocoder example}.
The integer time dilation phase modification $r$ of (\ref{eq:PhaseC}) is said to preserve the \textit{horizontal phase coherence}, and deals well with slowly varying instantaneous frequencies \cite{Horizontal}. When the instantaneous frequency of a component in a signal changes rapidly, the phase modification $r$ is not sufficient, and methods for ``locking'' the phase to a frequency bin outside the horizontal line are used in modern implementations \cite{phasiness0,vocoder_imp}. Nevertheless, in this toy application we consider horizontal phase lock. One potential motivation for using this simplistic model is that phasiness artifacts\footnote{Phasiness \cite{phasiness0} is the audible artifact resulting from summing two time-frequency atoms with intersecting time and \textcolor{black}{frequency} supports, but with out of sync phases} may be alleviated by using CWT atoms, since CWT atoms have shorter time supports than STFT atoms.

An example implementation of stochastic LTFT phase vocoder  (\ref{eq:StoDif1}) is given in \newline \url{https://github.com/RonLevie/LTFT-Phase-Vocoder}.
In the companion paper \cite{Ours_app}, we prove that the total number of scalar operations in the phase vocoder LTFT method is $O(ZM + M\log(M))$, when the number of Monte Carlo samples is $K=ZM$.
In future work, we will study more modern phase vocoder implementations based on the LTFT, akin to \cite{New_vocoder0,New_vocoder1,ltfatnote050,Ottosen2017APV}.


\bibliographystyle{spmpsci}    
\bibliography{Ref_uncertainty3,RandNLA}

\begin{thebibliography}{10}
\providecommand{\url}[1]{{#1}}
\providecommand{\urlprefix}{URL }
\expandafter\ifx\csname urlstyle\endcsname\relax
  \providecommand{\doi}[1]{DOI~\discretionary{}{}{}#1}\else
  \providecommand{\doi}{DOI~\discretionary{}{}{}\begingroup
  \urlstyle{rm}\Url}\fi

\bibitem{Cframe0}
Ali, S., Antoine, J., Gazeau, J.: Continuous frames in {H}ilbert space.
\newblock Annals of Physics \textbf{222}(1), 1 -- 37 (1993)

\bibitem{New_mult0}
Balazs, P.: Basic definition and properties of {B}essel multipliers.
\newblock Journal of Mathematical Analysis and Applications \textbf{325}(1),
  571 -- 585 (2007)

\bibitem{New_mult2}
Balazs, P., Bayer, D., Rahimi, A.: Multipliers for continuous frames in
  {H}ilbert spaces.
\newblock Journal of Physics A: Mathematical and Theoretical \textbf{45}(24)
  (2012)

\bibitem{ex3}
Balazs, P., Laback, B., Eckel, G., Deutsch, W.A.: Time-frequency sparsity by
  removing perceptually irrelevant components using a simple model of
  simultaneous masking.
\newblock Trans. Audio, Speech and Lang. Proc. \textbf{18}(1), 34--49 (2010).
\newblock \doi{10.1109/TASL.2009.2023164}

\bibitem{relevantS1}
Bass, R.F., Gröchenig, K.: {Relevant sampling of band-limited functions}.
\newblock Illinois Journal of Mathematics \textbf{57}(1), 43 -- 58 (2013).
\newblock \doi{10.1215/ijm/1403534485}

\bibitem{QMC0}
Caflisch, R.E.: Monte {C}arlo and quasi-{M}onte {C}arlo methods.
\newblock Acta Numerica \textbf{7}, 1–49 (1998).
\newblock \doi{10.1017/S0962492900002804}

\bibitem{Curvelet}
Candes, E., Donoho, D.: Continuous curvelet transform: I. resolution of the
  wavefront set.
\newblock Applied and Computational Harmonic Analysis \textbf{19}(2), 162--197
  (2005)

\bibitem{Berns}
Candes, E.J., Plan, Y.: A probabilistic and {RIPless} theory of compressed
  sensing.
\newblock IEEE Transactions on Information Theory \textbf{57}(11), 7235--7254
  (2011).
\newblock \doi{10.1109/TIT.2011.2161794}

\bibitem{Curve_Radon}
Candès, E.J., Donoho, D.L.: {Recovering edges in ill-posed inverse problems:
  optimality of curvelet frames}.
\newblock The Annals of Statistics \textbf{30}(3), 784 -- 842 (2002).
\newblock \doi{10.1214/aos/1028674842}

\bibitem{CW17}
Clarkson, K.L., Woodruff, D.P.: Low-rank {PSD} approximation in input-sparsity
  time.
\newblock In: {Proceedings of the Twenty-Eighth Annual ACM-SIAM Symposium on
  Discrete Algorithms (SODA)}, pp. 2061--2072 (2017).
\newblock \doi{10.1137/1.9781611974782.134}.
\newblock
  \urlprefix\url{https://epubs.siam.org/doi/abs/10.1137/1.9781611974782.134}

\bibitem{RadShearletInv}
Colonna, F., Easley, G., Guo, K., Labate, D.: Radon transform inversion using
  the shearlet representation.
\newblock Applied and Computational Harmonic Analysis \textbf{29}(2), 232--250
  (2010).
\newblock \doi{https://doi.org/10.1016/j.acha.2009.10.005}.
\newblock
  \urlprefix\url{https://www.sciencedirect.com/science/article/pii/S106352030900116X}

\bibitem{phase_vocoder2}
Crochiere, R.: A weighted overlap-add method of short-time fourier
  analysis/synthesis.
\newblock IEEE Transactions on Acoustics, Speech, and Signal Processing
  \textbf{28}(1), 99--102 (1980).
\newblock \doi{10.1109/TASSP.1980.1163353}

\bibitem{Ten_lectures}
Daubechies, I.: Ten Lectures on Wavelets.
\newblock SIAM: Society for Industrial and Applied Mathematics (1992)

\bibitem{ex6}
Donoho, D., Johnstone, J.: Ideal spatial adaptation by wavelet shrinkage.
\newblock Biometrika \textbf{81}(3), 425--455 (1994).
\newblock \doi{10.1093/biomet/81.3.425}

\bibitem{ex5}
Donoho, D.L., Johnstone, I.M., Kerkyacharian, G., Picard, D.: Wavelet
  shrinkage: Asymptopia?
\newblock Journal of the Royal Statistical Society. Series B (Methodological)
  \textbf{57}(2), 301--369 (1995)

\bibitem{New_vocoder1}
Driedger, J., M\''uller, M.: A review of time-scale modification of music
  signals.
\newblock Applied Sciences \textbf{12}(2) (2016)

\bibitem{DKM06}
Drineas, P., Kannan, R., Mahoney, M.: Fast {M}onte {C}arlo algorithms for
  matrices {I}: Approximating matrix multiplication.
\newblock SIAM Journal on Computing \textbf{36}(1), 132--157 (2006).
\newblock \doi{10.1137/S0097539704442684}.
\newblock \urlprefix\url{https://doi.org/10.1137/S0097539704442684}

\bibitem{DM05}
Drineas, P., Mahoney, M.W.: On the {N}ystr\"{o}m method for approximating a
  {G}ram matrix for improved kernel-based learning.
\newblock J. Mach. Learn. Res. \textbf{6}, 2153--2175 (2005).
\newblock \urlprefix\url{http://dl.acm.org/citation.cfm?id=1046920.1194916}

\bibitem{gmp0}
Duflo, M., Moore, C.: On the regular representation of a nonunimodular locally
  compact group.
\newblock J. Funct. Anal. \textbf{21}, 209 -- 243 (1976)

\bibitem{RadShearletInv2}
Easley, G.R., Colonna, F., Labate, D.: {Improved radon based imaging using the
  shearlet transform}.
\newblock In: H.H. Szu, F.J. Agee (eds.) Independent Component Analyses,
  Wavelets, Neural Networks, Biosystems, and Nanoengineering VII, vol. 7343,
  pp. 58 -- 67. International Society for Optics and Photonics, SPIE (2009)

\bibitem{ImEnhance0}
Easley, G.R., Labate, D.: Image Processing Using Shearlets, pp. 283--325.
\newblock Birkh{\"a}user Boston, Boston (2012).
\newblock \doi{10.1007/978-0-8176-8316-0_8}.
\newblock \urlprefix\url{https://doi.org/10.1007/978-0-8176-8316-0_8}

\bibitem{ItThresh0}
Elad, M., Starck, J., Querre, P., Donoho, D.: Simultaneous cartoon and texture
  image inpainting using morphological component analysis (mca).
\newblock Applied and Computational Harmonic Analysis \textbf{19}, 340--358
  (2005)

\bibitem{relevantS2}
Führ, H., Xian, J.: Relevant sampling in finitely generated shift-invariant
  spaces.
\newblock Journal of Approximation Theory \textbf{240}, 1--15 (2019).
\newblock \doi{https://doi.org/10.1016/j.jat.2018.09.009}

\bibitem{Fuhr_wavelet}
F{\"u}hr, H.: Abstract harmonic analysis of continuous wavelet transforms.
\newblock Springer (2005)

\bibitem{Time_freq}
Gr{\"o}chenig, K.: Foundations of Time-Frequency Analysis.
\newblock Birkh{\"a}user Basel (2001)

\bibitem{Cont_wavelet_original}
Grossmann, A., Morlet, J.: Decomposition of {H}ardy functions into square
  integrable wavelets of constant shape.
\newblock SIAM Journal on Mathematical Analysis \textbf{15}(4), 723--736 (1984)

\bibitem{gmp}
Grossmann, A., Morlet, J., Paul, T.: Transforms associated with square
  integrable group representations {I}. general results.
\newblock J. Math. Phys. \textbf{26}(10), 2473 -- 2479 (1985)

\bibitem{Shearlet}
Guo, K., Kutyniok, G., Labate, D.: Sparse multidimensional representations
  using anisotropic dilation and shear operators.
\newblock International Conference on the Interaction between Wavelets and
  Splinesl \textbf{Wavelets and Splines} (2005)

\bibitem{ex7}
Guo, Q., Yu, S., Chen, X., Liu, C., Wei, W.: Shearlet-based image denoising
  using bivariate shrinkage with intra-band and opposite orientation
  dependencies.
\newblock In: 2009 International Joint Conference on Computational Sciences and
  Optimization, vol.~1, pp. 863--866 (2009).
\newblock \doi{10.1109/CSO.2009.218}

\bibitem{SepThresh0}
Kutyniok, G.: Geometric separation by single-pass alternating thresholding.
\newblock Applied and Computational Harmonic Analysis \textbf{36}(1), 23--50
  (2014).
\newblock \doi{https://doi.org/10.1016/j.acha.2013.02.001}.
\newblock
  \urlprefix\url{https://www.sciencedirect.com/science/article/pii/S1063520313000195}

\bibitem{phasiness0}
{Laroche}, J., {Dolson}, M.: Phase-vocoder: about this phasiness business.
\newblock In: Proceedings of 1997 Workshop on Applications of Signal Processing
  to Audio and Acoustics, pp. 4 pp.-- (1997)

\bibitem{vocoder_imp}
Laroche, J., Dolson, M.: Improved phase vocoder time-scale modification of
  audio.
\newblock IEEE Transactions on Speech and Audio Processing \textbf{7}(3), 323
  -- 332 (1999)

\bibitem{Ours_app}
Levie, R., Avron, H.: Randomized continuous frames in time-frequency analysis.
\newblock arXiv preprint:2009.10525 [math.NA]  (2020)

\bibitem{New_vocoder0}
Liuni, M., Roebel, A.: Phase vocoder and beyond.
\newblock Music/Technology \textbf{7}, 73--89 (2013)

\bibitem{Mahoney11}
Mahoney, M.W.: Randomized algorithms for matrices and data.
\newblock Foundations and Trends in Machine Learning \textbf{3}(2), 123--224
  (2011).
\newblock \doi{10.1561/2200000035}.
\newblock \urlprefix\url{http://dx.doi.org/10.1561/2200000035}

\bibitem{ex4}
Majdak, P., Bal{\'a}zs, P., Kreuzer, W., D{\"o}rfler, M.: A time-frequency
  method for increasing the signal-to-noise ratio in system identification with
  exponential sweeps.
\newblock 2011 IEEE International Conference on Acoustics, Speech and Signal
  Processing (ICASSP) pp. 3812--3815 (2011)

\bibitem{ex1}
Matz, G., Hlawatsch, F.: Time-frequency transfer function calculus of linear
  time-varying systems, chapter 4.7 in ’time-frequency signal analysis and
  processing: A comprehensive reference’.
\newblock ed. B. Boashas. Oxford (UK): Elsevie pp. 135--144 (2003)

\bibitem{Op_Bern}
Minsker, S.: On some extensions of {B}ernstein’s inequality for self-adjoint
  operators.
\newblock Statistics \& Probability Letters \textbf{127}, 111 -- 119 (2017).
\newblock \doi{https://doi.org/10.1016/j.spl.2017.03.020}.
\newblock
  \urlprefix\url{http://www.sciencedirect.com/science/article/pii/S0167715217301207}

\bibitem{Horizontal}
Moulines, E., Laroche, J.: Non-parametric techniques for pitch-scale and
  time-scale modification of speech.
\newblock Speech Communication \textbf{16}(2), 175 -- 205 (1995).
\newblock \doi{https://doi.org/10.1016/0167-6393(94)00054-E}.
\newblock
  \urlprefix\url{http://www.sciencedirect.com/science/article/pii/016763939400054E}.
\newblock Voice Conversion: State of the Art and Perspectives

\bibitem{ex2}
Olivero, A., Torr{\'e}sani, B., Kronland-Martinet, R.: A class of algorithms
  for time-frequency multiplier estimation.
\newblock IEEE Transactions on Audio, Speech, and Language Processing
  \textbf{21}, 1550--1559 (2013)

\bibitem{Ottosen2017APV}
Ottosen, E.S., D{\"o}rfler, M.: A phase vocoder based on nonstationary {G}abor
  frames.
\newblock IEEE/ACM Transactions on Audio, Speech, and Language Processing
  \textbf{25}, 2199--2208 (2017)

\bibitem{relevantS4}
Patel, D., Sampath, S.: Random sampling in reproducing kernel subspaces of
  lp(rn).
\newblock Journal of Mathematical Analysis and Applications \textbf{491}(1),
  124270 (2020).
\newblock \doi{https://doi.org/10.1016/j.jmaa.2020.124270}

\bibitem{Weak_Integral}
Pettis, B.J.: On integration in vector spaces.
\newblock Transactions of the American Mathematical Society \textbf{44}(2), 277
  -- 304 (1938)

\bibitem{phase_vocoder1}
Portnoff, M.: Implementation of the digital phase vocoder using the fast
  {F}ourier transform.
\newblock IEEE Transactions on Acoustics, Speech, and Signal Processing
  \textbf{24}(3), 243--248 (1976).
\newblock \doi{10.1109/TASSP.1976.1162810}

\bibitem{ltfatnote050}
Pr\r{u}\v{s}a, Z., Holighaus, N.: Phase vocoder done right.
\newblock In: Proceedings of 25th European Signal Processing Conference
  (EUSIPCO-2017), pp. 1006--1010. Kos (2017)

\bibitem{Cframe1}
Rahimi, A., Najati, A., Dehghan, Y.N.: Continuous frame in {H}ilbert spaces.
\newblock Methods Funct. Anal. Topology \textbf{12}(2), 170--182 (2006)

\bibitem{New_mult1}
Stoeva, D., Balazs, P.: Invertibility of multipliers.
\newblock Applied and Computational Harmonic Analysis \textbf{33}(2), 292 --
  299 (2012)

\bibitem{MatrixConcentration}
Tropp, J.A.: An introduction to matrix concentration inequalities.
\newblock Found. Trends Mach. Learn. \textbf{8}(1-2), 1--230 (2015)

\bibitem{relevantS3}
Velasco, G.A.: Relevant sampling of the short-time fourier transform of
  time-frequency localized functions.
\newblock arXiv:1707.09634 [math.FA]  (2017)

\bibitem{Woodruff14}
Woodruff, D.P.: Sketching as a tool for numerical linear algebra.
\newblock Foundations and Trends in Theoretical Computer Science
  \textbf{10}(12), 1--157 (2014).
\newblock \doi{10.1561/0400000060}.
\newblock \urlprefix\url{http://dx.doi.org/10.1561/0400000060}

\bibitem{YMM16}
Yang, J., Meng, X., Mahoney, M.W.: Implementing randomized matrix algorithms in
  parallel and distributed environments.
\newblock Proceedings of the IEEE \textbf{104}(1), 58--92 (2016).
\newblock \doi{10.1109/JPROC.2015.2494219}

\bibitem{vocoder_book}
Zolzer, U.: DAFX: Digital Audio Effects, Second Edition.
\newblock Wiley (2011)

\end{thebibliography}

\appendix

\section{Hilbert space Berntein inequality}
\label{Hilbert space Berntein inequality}

The proof of Theorem \ref{H_Bern} is based on the finite dimensional counterpart from \cite[Theorem 2.6]{Berns}. There, the theorem is formulated for vectors in $\RR^n$, but can easily be extended to $\CC^n$. 

\begin{theorem}[Finite dimensional Bernstein inequality \cite{Berns}]
\label{C_Bern}
Let $\{v_k\}_{k=1}^K\subset \CC^d$ be a finite sequence of independent random vectors. Suppose that $\mathbb{E}(v_k)=0$ and $\norm{v_k}_{2}\leq B$ a.s. and assume that $\rho_K^2> \sum_{k=1}^K \mathbb{E}\norm{v_k}_{2}^2$. Then for all $0\leq t \leq \rho_K^2/B$,
\[P\left( \norm{\sum_{k=1}^K v_k}_{2}\geq t\right)\leq \exp\left(-\frac{t^2}{8\rho_K^2}+\frac{1}{4}\right).\]
\end{theorem}

In the following proof of Theorem \ref{H_Bern}, we use the fact that for weakly integrable random vectors $v:G\rightarrow \cH$ and bounded operators $T$ on $\cH$, we have
\begin{equation}
\label{eq:weakT}
    \int_G^{\rm w} T v(g) dg =  T\int_G^{\rm w}  v(g) dg .
\end{equation}

\begin{proof}[Proof of Theorem \ref{H_Bern}]
For a fixed $K$ we denote $\rho=\rho_K$.
\textcolor{black}{Let $\{w_l\}_{l=1}^{\infty}$ be an orthonormal basis in $\cH$, and $P_j$ be the orthogonal projection upon ${\rm span}\{w_l\}_{l=1}^j$.} 
Let us use Theorem \ref{C_Bern} on the random vectors $\{v^j_k\}_{k=1}^K= \{P_jv_k\}_{k=1}^K$, as vectors of $\CC^{d_j}$, for fixed $j$. By (\ref{eq:weakT}), we have
\[\mathbb{E}^{\rm w}(v_k^j)=\mathbb{E}^{\rm w}(P_jv_k)= P_j \mathbb{E}^{\rm w}(v_k)=\mathbb{E}(v_k)=0.\]
Next, by the fact that $P_j$ is a projection
\[\norm{v^j_k}_2 = \norm{P_j v_k}_{\cH} \leq \norm{v_k}_{\cH} \leq B.\]
Now, the pointwise bound $\norm{P_j v_k}_{\cH}^2 \leq \norm{v_k}_{\cH}^2$ carries to the integrals in the calculation of the expected values, so
\[\sum_{k=1}^K \mathbb{E}\norm{v_k^j}_2^2 =\sum_{k=1}^K \mathbb{E}\norm{P_j v_k}_{\cH}^2 \leq  \sum_{k=1}^K \mathbb{E}\norm{v_k}_{\cH}^2 \leq \rho^2.\]
Thus, Theorem \ref{C_Bern} gives
\begin{equation}
\forall \ 0\leq t \leq \rho^2/B\ .\quad P\Big(\norm{\sum_{k=1}^K v_k^j}_2\geq t \Big) \leq \exp\Big(-\frac{t^2}{8\rho^2}+\frac{1}{4}\Big).
\label{eq:in_proof_bern1}
\end{equation}

Next, we show that (\ref{eq:in_proof_bern1}) carries also in the limit as $j\rightarrow\infty$.
Consider the following functions in the probability space $G^K$:
the characteristic function $\chi$ of the set
\[\Big\{{\bf g}\ \big|\ \norm{\sum_{k=1}^K v_k({\bf g})}_{\cH}>t\Big\},\]
and  characteristic function $\chi_j$ of
\[\Big\{{\bf g}\ \big|\ \norm{\sum_{k=1}^K v^j_k({\bf g})}_2>t\Big\}.\]
By the fact that projections reduce norms, $\chi_j({\bf g})\leq \chi({\bf g})$ for every ${\bf g}\in G_0^K$. Moreover, $\chi_j$ is a pointwise monotone sequence of measurable functions. By the strong convergence of the projections $P_j$ to $I$, we have
\[\forall {\bf g}\in G_0^K\ . \quad \lim_{j\rightarrow\infty}{\chi_j({\bf g})} = \chi({\bf g}).\]
This is shown as follows. Let ${\bf g}=(g_1,\ldots,g_K)$ be a fixed point. If $\chi({\bf g})=0$ then it is trivial to see $\lim_{j\rightarrow\infty}{\chi_j({\bf g})} = \chi({\bf g})$. Otherwise, for every $\e>0$
there is a big enough $J_{\e}\in\NN$ such that for every  $j>J_{\e}$ we have
\[\abs{\ \norm{\sum_{k=1}^K v_k(g_k)}_{\cH} - \norm{\sum_{k=1}^K v_k^j(g_k) }_2\ }<\e.\]
Since $\chi({\bf g})=1$, we have
\[\norm{\sum_{k=1}^K v_k(g_k)}_{\cH}=r>t.\]
Therefore, for $\e<0.5(r-t)$, and any $j>J_{\e}$
\[ t<r-\e<\norm{\sum_{k=1}^K v_k^j}_2<r+\e\]
so
 $\chi_j({\bf g})=1$, 
which proves that $\lim_{j\rightarrow\infty}{\chi_j({\bf g})} = \chi({\bf g})$. \textcolor{black}{Lastly, (\ref{eq:H_bern}) follows from  Beppo Levi's monotone convergence theorem.} 


\end{proof}

\section{Pseudo inverse of frame analysis and synthesis operators}
\label{Pseudo inverse of frame analysis and synthesis operators}

For an injective linear operator with close range $B:\mathcal{V}\rightarrow \mathcal{W}$ between the Hilbert spaces $\mathcal{V}$ and $\mathcal{W}$, we define the pseudo inverse
\[B^+:\mathcal{W}\rightarrow \mathcal{V}, \quad B^+ = \big(B\big|_{\mathcal{V}\rightarrow B\mathcal{V}}\big)^{-1}R_{B\mathcal{V}},\]
where $R_{B\mathcal{V}}:\mathcal{W}\rightarrow B\mathcal{V}$ is the surjective operator given by the orthogonal projection from $\mathcal{W}$ to $B\mathcal{V}$ and restriction of the image space to the range $B\mathcal{V}$,  
 and $B\big|_{\mathcal{V}\rightarrow B\mathcal{V}}$ is the restriction of the image space of $B$ to its range $B\mathcal{V}$, in which it is invertible.
 Note that $R_{B\mathcal{V}}^*$ is the operator that takes a vector from $B\mathcal{V}$ and canonically embeds it in $\mathcal{W}$, and $P_{B\mathcal{V}}=R_{B\mathcal{V}}^*R_{B\mathcal{V}}:\mathcal{W}\rightarrow\mathcal{W}$ is the orthogonal projection upon $B\mathcal{V}$. 
In the following we list basic properties of $V_f$ and $V_f^+$.

\begin{lemma}
\label{Lemm_PI}
Let $f:G\rightarrow\cH$ be a continuous frame. Then the following properties hold.
\begin{enumerate}
  \item 
	\label{Lemm_PI:0}
	$V_f^+ V_f = I$.
	\item 
	\label{Lemm_PI:1}
	$V_f V_f^+ = P_{V_f[\cH]}$.
	\item
	\label{Lemm_PI:10}
	$V_f^+ P_{V_f[\cH]} = V_f^+$.
	\item 
	\label{Lemm_PI:2}
	The operator $V_f^*$ is the pseudo inverse of $V_f^{+*}$, and $V_f^{+*}[\cH]=V_f[\cH]$.
	\item 
	\label{Lemm_PI:3}
	$(V_f^*V_f)^{-1}= V_f^+V_f^{+*}$.
\end{enumerate}
\end{lemma}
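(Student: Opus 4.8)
The plan is to exploit the lower frame bound. Since $A\norm{s}_{\cH}^2\le\norm{V_f[s]}_2^2$ with $A>0$, the analysis operator $V_f:\cH\to L^2(G)$ is bounded below, hence injective with closed range $V_f[\cH]$; this is precisely what is needed for the pseudo inverse $V_f^+$ of the preceding paragraph to be well defined, and boundedness of $V_f^+$ follows from the same lower bound. I would introduce two pieces of shorthand: the invertible operator $W:=V_f\big|_{\cH\to V_f[\cH]}$, and the isometric inclusion $\iota:=R_{V_f[\cH]}^*$ of $V_f[\cH]$ into $L^2(G)$, so that $R_{V_f[\cH]}\iota=I$ on $V_f[\cH]$, $\iota R_{V_f[\cH]}=P_{V_f[\cH]}$, and $V_f=\iota W$. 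Taking adjoints of the last identity gives $V_f^*=W^*R_{V_f[\cH]}$, and by definition $V_f^+=W^{-1}R_{V_f[\cH]}$.

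With this factorization, properties \ref{Lemm_PI:0} and \ref{Lemm_PI:1} are immediate: $V_f^+V_f=W^{-1}R_{V_f[\cH]}\iota W=W^{-1}W=I$, while $V_fV_f^+=\iota W W^{-1}R_{V_f[\cH]}=\iota R_{V_f[\cH]}=P_{V_f[\cH]}$. Property \ref{Lemm_PI:10} then follows without further computation, since $V_f^+P_{V_f[\cH]}=V_f^+(V_fV_f^+)=(V_f^+V_f)V_f^+=V_f^+$ by the two identities just proved.

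The only real bookkeeping is in properties \ref{Lemm_PI:2} and \ref{Lemm_PI:3}, where one must track domains and codomains carefully while taking adjoints. I would first compute $V_f^{+*}=R_{V_f[\cH]}^*(W^*)^{-1}=\iota(W^*)^{-1}$, which is injective with range $\iota[V_f[\cH]]=V_f[\cH]$, giving $V_f^{+*}[\cH]=V_f[\cH]$. Restricting $V_f^{+*}$ to its range and composing with $R_{V_f[\cH]}$ shows $V_f^{+*}\big|_{\cH\to V_f[\cH]}=R_{V_f[\cH]}\iota(W^*)^{-1}=(W^*)^{-1}$, so its inverse is $W^*$; substituting into the definition of the pseudo inverse yields $(V_f^{+*})^+=W^*R_{V_f[\cH]}=V_f^*$, which is property \ref{Lemm_PI:2}. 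Finally, for property \ref{Lemm_PI:3} I would multiply the two factorizations: $V_f^+V_f^{+*}=W^{-1}R_{V_f[\cH]}\iota(W^*)^{-1}=W^{-1}(W^*)^{-1}=(W^*W)^{-1}$, and observe that $V_f^*V_f=W^*R_{V_f[\cH]}\iota W=W^*W$, so that $(V_f^*V_f)^{-1}=(W^*W)^{-1}=V_f^+V_f^{+*}$. The frame bounds guarantee $W^*W=S_f$ is invertible, so all inverses above exist. The main obstacle is purely notational: keeping straight which copy of $V_f[\cH]$ (the abstract range versus the subspace of $L^2(G)$) each operator acts on, which is exactly what the inclusion/projection pair $\iota,R_{V_f[\cH]}$ is designed to manage.
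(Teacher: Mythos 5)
The paper never actually proves Lemma \ref{Lemm_PI}: it is stated as a list of basic properties immediately after the definition of the pseudo inverse, and is then invoked without justification (e.g., in Proposition \ref{MCsynthesis}, in Proposition \ref{Delta_approx3}, and in the derivations following (\ref{eq:PSSPS22}) and (\ref{eq:PSSPA2})). So there is no argument of the paper's to compare yours against; the only question is whether your proof stands on its own, and it does. The two facts the paper leaves implicit are exactly the ones you establish first: the lower frame bound makes $V_f$ bounded below, hence injective with closed range $V_f[\cH]$, so $W=V_f\big|_{\cH\rightarrow V_f[\cH]}$ has a bounded inverse and $V_f^+=W^{-1}R_{V_f[\cH]}$ is well defined and bounded. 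Your factorization $V_f=\iota W$, with $\iota=R_{V_f[\cH]}^*$ satisfying $R_{V_f[\cH]}\iota=I$ and $\iota R_{V_f[\cH]}=P_{V_f[\cH]}$, then gives items \ref{Lemm_PI:0}--\ref{Lemm_PI:10} in one line each. For item \ref{Lemm_PI:2}, the computation $V_f^{+*}=\iota(W^*)^{-1}$ does double duty: it identifies the range $V_f^{+*}[\cH]=V_f[\cH]$, whose closedness is needed for the pseudo inverse of $V_f^{+*}$ to be defined at all, and it shows the codomain-restricted operator is $(W^*)^{-1}$, whence $(V_f^{+*})^+=W^*R_{V_f[\cH]}=V_f^*$. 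Item \ref{Lemm_PI:3} follows correctly from $V_f^*V_f=W^*W$ and $V_f^+V_f^{+*}=W^{-1}(W^*)^{-1}=(W^*W)^{-1}$. All steps are sound, and your proof fills a gap the paper leaves to the reader.
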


\end{document}